\newtheorem{theorem}{Theorem}[section]
\newtheorem{lemma}[theorem]{Lemma}
\newtheorem{proposition}[theorem]{Proposition}
\newtheorem{corollary}[theorem]{Corollary} 
\newtheorem{remark}[theorem]{Remark}
\newtheorem{example}[theorem]{Example}
\newtheorem{problem}{Problem}
\newtheorem{notation}[theorem]{Notation}
\newtheorem{definition}[theorem]{Definition}
\newtheorem{precor}{{\bf Corollary}}
\newtheorem{precon}{{\bf Conjecture}}
\newtheorem{predefin}{{\bf Definition}}
\newtheorem{preexm}{{\bf Example}}
\newtheorem{preappl}{{\bf Application}}
\newtheorem{prelem}{{\bf Lemma}}
\newtheorem{preproof}{{\bf Proof.\ }}
\newenvironment{proof}[1]{\begin{preproof}{\rm
               #1}\hfill{$\blacksquare$}}{\end{preproof}}
\newtheorem{presproof}{{\bf Sketch of Proof.\ }}
\newtheorem{prethm}{{\bf Theorem}}
\newtheorem{preconj}{{\bf Conjecture}}
\newtheorem{preques}{{\bf Question}}
\newtheorem{prealphthm}{{\bf Theorem}}
\newtheorem{prepro}{{\bf Proposition}}
\newtheorem{preprb}{{\bf Problem}}
\newtheorem{prerem}{{\bf Remark}}
\def\mult{{\rm mult}}
\def\conct[#1,#2]{\mbox {${#1} \leftrightarrow {#2}$}}
\def\dconct[#1,#2]{\mbox {${#1} \rightarrow {#2}$}}
\def\deg[#1,#2]{\mbox {$d_{_{#1}}(#2)$}}
\def\mindeg[#1]{\mbox {$\delta_{_{#1}}$}}
\def\maxdeg[#1]{\mbox {$\Delta_{_{#1}}$}}
\def\outdeg[#1,#2]{\mbox {$d_{_{#1}}^{^+}(#2)$}}
\def\minoutdeg[#1]{\mbox {$\delta_{_{#1}}^{^+}$}}
\def\maxoutdeg[#1]{\mbox {$\Delta_{_{#1}}^{^+}$}}
\def\indeg[#1,#2]{\mbox {$d_{_{#1}}^{^-}(#2)$}}
\def\minindeg[#1]{\mbox {$\delta_{_{#1}}^{^-}$}}
\def\maxindeg[#1]{\mbox {$\Delta_{_{#1}}^{^-}$}}
\def\dre[#1,#2,#3]{\mbox {${\cal E}_{_{#3}}(#1,#2)$}}
\def\pdre[#1,#2,#3]{\mbox {${\cal P}_{_{#3}}(#1,#2)$}}
\def\var[#1,#2]{\mbox {${\rm Var}_{_{#1}}(#2)$}}
\def\ls[#1]{\mbox {$\xi^{^{#1}}$}}
\def\hom[#1,#2]{\mbox {${\rm Hom}({#1},{#2})$}}
\def\onvhom[#1,#2]{\mbox {${\rm Hom^{v}}(#1,#2)$}}
\def\onehom[#1,#2]{\mbox {${\rm Hom^{e}}(#1,#2)$}}
\def\core[#1]{\mbox {$#1^{^{\bullet}}$}}
\def\cay[#1,#2]{\mbox {${\rm Cay}({#1},{#2})$}}
\def\dCay{\overrightarrow{{\rm Cay}}}
\def\cays[#1,#2]{\mbox {${\rm Cay_{s}}({#1},{#2})$}}
\def\dirc[#1]{\mbox {$\stackrel{\rightarrow}{C}_{_{#1}}$}}
\def\cycl[#1]{\mbox {${\bf Z}_{_{#1}}$}}
\def\sdg[#1]{\mbox {$\stackrel{\leftrightarrow}{#1}$}}
\def\Ga{\Gamma}
\def\2sc{{{\rm 2S}}}
\def\dsc{{\rm \overrightarrow{2S}}}
\def\da{{\rm \overrightarrow{DA}}}
\def\Aut{{\rm Aut}}
\def\Alt{{\rm Alt}}
\def\Sym{{\rm Sym}}
\def\la{\langle}
\def\ra{\rangle}
\def\da{{\rm DA}}
\def\dda{{\rm \overrightarrow{DA}}}
\def\dga{{\rm \overrightarrow{GA}}}
\def\ca{{\rm CA}}
\newcommand{\Der}{\mathop{\mathrm{Der}}}
\begin{document}
\begin{center}
{\Large \bf Derangement action digraphs and graphs}\\
\vspace*{0.5cm}
{\bf Moharram N. Iradmusa$^{a}$, Cheryl E. Praeger$^b$}\\
\vspace*{0.2cm}
{\it {\small $^{a}$Department of Mathematical Sciences, Shahid Beheshti University, G.C.\\
P.O. Box 19839-63113, Tehran, Iran\\
m\_iradmusa@sbu.ac.ir\\
$^{b}$Centre for the Mathematics of Symmetry and Computation, The University\\
 of Western Australia, 35 Stirling Highway, Crawley, WA 6009, Australia\\
cheryl.praeger@uwa.edu.au}}\\
%\textbf{\hrulefill}\\
\end{center}
\begin{abstract}
A derangement of a set $X$ is a fixed-point-free permutation of $X$.
Derangement action digraphs are closely related to group action digraphs introduced by Annexstein, Baumslag and Rosenberg in 1990. 
For a non-empty set $X$ and a non-empty subset $S$ of derangements of $X$, the derangement action digraph $\dda(X, S)$ has 
vertex set $X$, and an arc from $x$ to $y$ if and only if $y$ is the image of $x$ under the action of some element of $S$, so by
definition it is a simple digraph. 
In common with Cayley graphs and Cayley digraphs, derangement action digraphs may be useful to model networks since the same 
routing and communication schemes can be implemented at each vertex. We prove that the family of derangement action digraphs
contains all Cayley digraphs, all finite vertex-transitive simple graphs, and all finite regular simple graphs of even valency. 
We determine necessary and sufficient conditions on $S$ 
under which $\dda(X, S)$ may be viewed as a simple undirected graph of valency $|S|$.
We investigate structural and symmetry properties of these digraphs and graphs, pose several open problems, 
and give many examples.\\
\begin{itemize}
\item[]{{\footnotesize {\bf Key words:}\ Cayley graph, vertex-transitive graph, group action digraphs, derangements.}}
\item[]{ {\footnotesize {\bf Subject classification: 05C25} .}}
\end{itemize}
\end{abstract}
%%%%%%%%%%%%%%%%%%%%%%%%%%%%%%%%%%%%%%%%%%%%%%%%%%%%%%%
%%%%%%%%%%%%%%%%%%%%%%%%%%%%%%%%%%%%%%%%%%%%%%%%%%%%%%%
%%%%%%%%%%%%%%%%%%%%%%%%%%%%%%%%%%%%%%%%%%%%%%%%%%%%%%%
\section{Introduction}\label{sec:intro} 
%\cite{She89a}

%{\color{blue}
%}
Group action digraphs were introduced in 1990  by Annexstein, Baumslag and Rosenberg~\cite{ann90a} as models for  interconnection 
networks underpinning parallel computer architectures. We study a closely related family of digraphs, called \emph{derangement action digraphs}. 
We are particularly concerned with their structural and symmetry properties.  

As  in \cite[Abstract]{ann90a}, for a set $X$ and a subset $S$ of the symmetric group 
$\Sym(X)$ of permutations on $X$, the  \emph{group action digraph} $\dga(X,S)$ is the digraph with vertex set 
$X$ such that, for each $s\in S$ and each vertex 
$v\in X$, there is an arc labelled $s$ from $v$ to the image $v^s$ of $v$ under the action of $s$.
In particular, $\dga(X,S)$ has a loop (an arc from a vetex to itself) at a vertex $v$ whenever 
$v$ is a fixed point of some permutation in $S$. Throughout this paper we wish to avoid loops so we will require 
each arc of  $\dga(X,S)$ to involve two distinct vertices as its `endpoints', equivalently we require each
 $s\in S$ to act without fixed points on $X$. Such a permutation 
$s$ is called a \emph{derangement} of $X$.  Thus we require $S$ to be a subset of the set 
$\Der(X)$ of derangements of $X$. We call such a digraph $\dga(X,S)$ a \emph{loopless group action digraph}.

Loopless group action digraphs may have `multiple arcs', that is to say, there may exist
vertices $v\in X$ and distinct derangements $s, t\in S$ such that  $v^s$ and $v^t$ are equal, say to $u$. In this case 
there are distinct arcs from $v$ to $u$, with one labelled $s$ and the other labelled $t$. 
The number of arcs in  $\dga(X,S)$ from $v$ to $u$ is the number of elements $s\in S$ such that $v^s=u$,
and is called the \emph{multiplicity} of $(v,u)$, denoted $\mult(v,u)$.  If there are no arcs from $v$ to $u$ then we
write $\mult(v,u)=0$.  We say that $\dga(X,S)$ is \emph{multiplicity-free} if $\mult(v,u)\leq 1$ for all
$(v,u)\in X\times X$.  
We are interested in \emph{simple digraphs}. These are digraphs $\Ga= (X, A)$ consisting of a set $X$ of vertices and
a set $A$ of ordered pairs of distinct vertices, called arcs. That is to say, $A$ is a subset of  $X^{(2)} :=\{(u,v) | u,v\in X, u\ne v\}$.
For each loopless group action digraph there is an
underlying simple digraph defined as follows.

\begin{definition}\label{def:dad}
{\rm
Let $X$ be a non-empty set, and let $S\subseteq\Der(X)$, with $S\ne\emptyset$. Define the \emph{derangement action digraph}  
with derangement connection set $S$ as the digraph $\dda(X,S) = (X, A)$ with vertex set $X$ and 
arc set $A= \{ (x,x^s) | x\in X, s\in S\} \subseteq X^{(2)}$. 
}

\end{definition}
 To emphasise:  \emph{we regard $(v,v^s)$ and $(v,v^t)$ as the same arc of $\dda(X,S)$ if  $v^s=v^t$, for $s, t\in S$}. 
 Moreover, since $S\subseteq \Der(X)$, we have  $A\subseteq X^{(2)}$ so that $\dda(X,S)$ is a simple digraph, and  is
determined uniquely by the loopless group action digraph  $\dga(X,S)$. If also  $\dga(X,S)$ is multiplicity-free then  
$\dga(X,S)$ and  $\dda(X,S)$ are `essentially the same' if we ignore the labels on the  arcs of  $\dga(X,S)$. 
It is possible, however, to have  $\dda(X,S)= \dda(X,S')$ for distinct $S, S'\subset\Der(X)$, even if both  $\dga(X,S)$ and
 $\dga(X,S')$ are loopless and multiplicity-free  (Example~\ref{ex:nonuniqueS} with $\{S,S'\}=\{S_1,S_2\}$). 
 This non--uniqueness of $S$ raises a natural question:
 
 \begin{center}
 \emph{If  $\dga(X,S')$ is loopless with multiple arcs, when can we replace $S$ by some $S'\subset\Der(X)$\\ such that   
 $\dga(X,S')$ is multiplicity-free and  $\dda(X,S)= \dda(X,S')$?}
 \end{center}
 \noindent
Sometimes we can do this   (Example~\ref{ex:nonuniqueS} with  $(S,S')=(S_2,S_3)$) but not always   (Example~\ref{ex:irregular} and Remark~\ref{rem:irregular}).  Some concepts used in these examples are introduced in Notation~\ref{notation-intro1}.

\begin{example}\label{ex:nonuniqueS}
{\rm
%(a) 
Let $X=\{1,2,3,4\}$, $S_1=\{(1234), (1432)\}$, $S_2=\{ (12)(34), (14)(23)\}$, and 
$S_3=\{(1234), (12)(34), (14)(23)\}$. Then each subset $S_i\subset \Der(X)$, 
and 
\[
\dda(X,S_1)=\dda(X,S_2)=\dda(X,S_3)
\] 
is a simple graph, namely an undirected cycle $C_4$ of length $4$.  In fact  $S_2$ is the only proper subset of $S_3$ 
giving the same simple digraph  $\dda(X, S_3)$. In addition $\dga(X,S_1)$ and $\dga(X,S_2)$  are multiplicity-free
but $\dga(X,S_3)$ is not. 
%
%\medskip
%(b)   Let $X=\{1,2,3,4,5,6\}$, $S_1=\{(123456), (165432)\}$, and for a second connection subset take
%$S_2=\{ (12)(34)(56),$ $(16)(23)(45)\}.$ 
%Then  $S_1\subset \Der(X)$,  $S_2\subset \Der(X)$, and $\da(X,S_1)=\da(X,S_2)$ is an undirected cycle $C_6$ of length $6$, 
%and there are no multiple arcs. 
}
\end{example}

\begin{example}\label{ex:irregular}
{\rm
Let $X=\{1,2,3,4,5,6,7,8\}$ and $S=\{a, b, c\}$ where $a = (12)(34)(56)(78)$, $b=(18)(27)(34)(56)$, 
and $c=(18)(45)(23)(67)$. 
Then $S\subset \Der(X)$, and some vertex pairs $(u,v)$ in $\dga(X,S)$  have $\mult(u,v)=2$, 
but also $\dda(X,S)$ is a graph, namely a cycle 
$(1,2,3,4,5,6,7,8)$ of length $8$ with one extra edge between the vertices $2$ and $7$. 
}
\end{example}

\begin{remark}\label{rem:irregular}{\rm %{\bf make a lemma in section 2?}
The digraph $\dda(X,S)$ in Example~\ref{ex:irregular}  is not regular since six of the vertices have 
valency $2$, while the vertices $2$ and $7$ have valency $3$. 
Also omitting any of the derangements $a, b, c$ 
from $S$ produces a different derangement action digraph. So even though $\mult(x,y)=2$ for some pairs $(x,y)$,
no proper subset of $S$ will produce the same digraph $\dda(X,S)$ with smaller 
multiplicities. In fact there is no derangement set $S'\ne S$ such that 
$\dda(X, S')=\dda(X,S)$, and  such that $\dga(X, S')$ has smaller  multiplicities 
than $\dga(X, S)$. To see this, observe that 
 $\dda(X,S)$ has $9$ edges, and hence $18$ arcs, and each derangement in $S'$ 
would correspond to exactly $8$ arcs. So $|S'|\geq 3$, and $S'$ can do no better than the subset $S$ which produces 
$12$ arcs with multiplicity $1$, and $6$ arcs with multiplicity $2$. 
}
\end{remark}

It would be interesting to have necessary and sufficient conditions on $S$ for such a replacement to be possible, and currently this is an open problem. 
However we have a simple criteria for  $\dga(X,S)$ to be multiplicity-free, and it allows us to show that the family of
derangement action digraphs is large. 
 We always assume that the subset $S$ is finite, and some of our proofs require also that the set $X$ is finite. If a result does require $X$ to be finite we will specify this in the statement. For convenience we record in Notation~\ref{notation-intro1} below the concepts and notation used in Theorem~\ref{thm:dareg1} (which is proved in Sections~\ref{sec:props} and~\ref{sec:graph}). 

\begin{theorem}\label{thm:dareg1}
\begin{enumerate}
\item[(a)] Let $S\subseteq \Der(X)$ with $X$ non-empty and  $S$ finite.  Then  $\dga(X,S)$ is multiplicity-free $\iff$ $SS^{-1}\subseteq \Der(X)\cup\{1\}$  $\iff$ 
 $\dda(X,S)$ is regular of valency  $|S|$.

\item[(b)]  Every finite simple regular digraph is a derangement action digraph.
\end{enumerate}
\end{theorem}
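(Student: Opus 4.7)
For part (a), I would establish the three-way equivalence by proving $(1)\Leftrightarrow(2)$ and $(2)\Leftrightarrow(3)$ directly from the definitions, where (1) is the multiplicity-freeness of $\dga(X,S)$, (2) is the condition $SS^{-1}\subseteq\Der(X)\cup\{1\}$, and (3) is $|S|$-regularity of $\dda(X,S)$. For $(1)\Leftrightarrow(2)$: the digraph $\dga(X,S)$ has some multiplicity exceeding one iff there exist distinct $s,t\in S$ and a vertex $v$ with $v^s=v^t$. Applying $t^{-1}$, this is equivalent to $v$ being a fixed point of the non-identity element $st^{-1}\in SS^{-1}$, so $\dga(X,S)$ fails to be multiplicity-free iff $SS^{-1}\setminus\{1\}$ meets the complement of $\Der(X)$.

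For $(2)\Leftrightarrow(3)$: the out-valency at $v$ in $\dda(X,S)$ equals $|\{v^s:s\in S\}|$, and this equals $|S|$ at every $v$ iff the map $s\mapsto v^s$ is injective on $S$ for every $v$, which by the calculation just given is equivalent to (2). Dually, the in-valency at $u$ equals $|\{u^{s^{-1}}:s\in S\}|$, which equals $|S|$ at every $u$ iff $S^{-1}S\subseteq\Der(X)\cup\{1\}$. The two conditions $SS^{-1}\subseteq\Der(X)\cup\{1\}$ and $S^{-1}S\subseteq\Der(X)\cup\{1\}$ coincide because $s^{-1}t$ is conjugate to $ts^{-1}$ via $t$, and conjugate permutations have the same cycle structure, so one is fixed-point-free iff the other is. Hence regularity in both senses is controlled by the single condition (2).

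For part (b), let $\Gamma=(X,A)$ be a finite simple digraph in which every vertex has in-valency and out-valency equal to some $k$. I would form the bipartite graph $B$ on vertex set $X^{+}\sqcup X^{-}$ (disjoint copies of $X$ for tails and heads of arcs), placing an edge between $v^{+}$ and $u^{-}$ whenever $(v,u)\in A$. Then $B$ is $k$-regular bipartite, so by K\"onig's edge-colouring theorem its edge set decomposes into $k$ perfect matchings $M_{1},\ldots,M_{k}$. Each matching $M_{i}$ defines a permutation $s_{i}$ of $X$ by sending $v$ to the unique $u$ with $\{v^{+},u^{-}\}\in M_{i}$. Since $\Gamma$ is loopless, no matching pairs $v^{+}$ with $v^{-}$, so each $s_{i}$ is a derangement; distinct matchings yield distinct permutations because the arc set $\{(v,v^{s_{i}}):v\in X\}$ of $s_{i}$ recovers $M_{i}$. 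Setting $S=\{s_{1},\ldots,s_{k}\}$, every arc of $\Gamma$ lies in exactly one $M_{i}$, so $\dda(X,S)=\Gamma$.

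The main obstacles are both quite mild. In (a) the subtlest point is the conjugacy observation which collapses $SS^{-1}$ and $S^{-1}S$ to a single condition; without this one obtains only out-regularity from (2) and must argue separately for in-regularity. In (b) the whole argument rests on K\"onig's decomposition theorem, so there is no combinatorial difficulty beyond recognising the regular bipartite structure hidden in a regular simple digraph.
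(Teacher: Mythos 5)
Your proposal is correct and follows essentially the same route as the paper: part (a) is proved by the same translation between repeated images $v^s=v^t$ and fixed points of $st^{-1}\in SS^{-1}$ (your conjugacy argument identifying the conditions on $SS^{-1}$ and $S^{-1}S$ is in fact a cleaner justification than the paper's one-line remark), and part (b) uses the same bipartite double cover of the digraph. The only cosmetic difference is that you invoke K\"onig's edge-colouring theorem to obtain all $k$ perfect matchings at once, whereas the paper extracts one perfect matching, deletes the corresponding $1$-regular spanning sub-digraph, and recurses on the resulting $(k-1)$-regular digraph.
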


The family of finite derangement action digraphs is larger than the class of  finite simple regular digraphs: it certainly contains some digraphs that are not regular (Example~\ref{ex:irregular}).

%%%%%%%%
%

\begin{notation}\label{notation-intro1}{\rm 
For a simple digraph $\Ga=(X,A)$ and arc $(x,y)\in A$, 
$y$ is called an \emph{out-neighbour} of $x$ and $x$ is called  an \emph{in-neighbour} of $y$.  
The number of out-neighbours (respectively in-neighbours) of $x$ is called the \emph{out-valency}
(respectively,  \emph{in-valency}) of $x$.  If there is a constant $k$ such that each vertex has out-valency $k$ 
and in-valency $k$, then $\Ga$ is said to be \emph{$k$-regular}, or simply \emph{regular}, and $k$ is called the \emph{valency} of $\Ga$. 
If $A$ contains both $(x,y)$ and $(y,x)$ then we call the 
unordered pair $\{(x,y), (y,x)\}$ an \emph{edge} of $\Ga$.
If $A$ is \emph{symmetric}, that is, if $(y,x)\in A$ implies $(x,y)\in A$, then we call
$\Ga$ a \emph{simple  graph}, and often interpret it as the \emph{simple undirected graph} $(V, E)$, where
$E$ is the set of edges. 
For a subset $S\subseteq \Sym(X)$, we write $S^{-1}=\{g^{-1}\mid g\in S\}$, 
and if $S=S^{-1}$ we say that $S$ is \emph{self-inverse}. For subsets $A, B\subset\Sym(X)$,
we write $AB=\{a b\mid a\in A, b\in B\}$. 
}
\end{notation}

If a  derangement action digraph $\dda(X,S)$ is a graph, then $\dda(X,S)$ 
is called a  \emph{derangement action graph}, and is sometimes denoted $\da(X,S)$.
We introduce a condition on $S$ which turns out to be sufficient for
$\dda(X,S)$ to be a regular simple graph.

\begin{definition}\label{def:cds}
{\rm
Let $X$ be a non-empty set. A non-empty subset $S\subseteq\Der(X)$
is said to be  \emph{closed} if  both of the following conditions hold.
\begin{center}
(1)\ $x^S=x^{S^{-1}}$ for each $x\in X$; \quad and \quad (2) $SS^{-1}\subseteq \Der(X)\cup\{1\}$. %$|x^S|=|S|$ for each $x\in X$ or equivalently  
\end{center} 
}
\end{definition}

\begin{theorem}\label{thm:dasimple}
Let $X$ be a non-empty set, and let $S\subseteq \Der(X)$ with $S$ finite. Then $\dda(X,S)$ is a derangement 
action graph which is regular of valency $|S|$ if and only if $S$ is  closed.
\end{theorem}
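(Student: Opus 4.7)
My plan is to prove the theorem by decomposing the two claimed properties of $\dda(X,S)$, namely (i) being a graph, and (ii) being regular of valency $|S|$, into the two conditions in Definition~\ref{def:cds}. The key observation is that condition (1) of closedness controls the symmetry of the arc set, while condition (2) controls the multiplicity/regularity via Theorem~\ref{thm:dareg1}(a), which I would cite directly.

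First, I would translate the condition that $\dda(X,S)$ is a graph into a statement about orbits under $S$ and $S^{-1}$. By Definition~\ref{def:dad}, the out-neighbours of any vertex $x$ form the set $x^S$, and the in-neighbours of $x$ are exactly $x^{S^{-1}}$ (since $(y,x)$ is an arc iff $x = y^s$ for some $s\in S$, iff $y = x^{s^{-1}}$). The arc set $A$ is symmetric (that is, $\dda(X,S)$ is a graph) precisely when the out-neighbour set and in-neighbour set of every vertex coincide, i.e.\ $x^S = x^{S^{-1}}$ for all $x\in X$. This is exactly condition (1) of Definition~\ref{def:cds}.

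Next, given Theorem~\ref{thm:dareg1}(a), the condition that $\dda(X,S)$ be regular of valency $|S|$ is equivalent to $\dga(X,S)$ being multiplicity-free, which is equivalent to $SS^{-1}\subseteq \Der(X)\cup\{1\}$, that is, condition (2) of Definition~\ref{def:cds}. With these two equivalences in hand, the theorem follows immediately in both directions: if $S$ is closed, then (1) gives that $\dda(X,S)$ is a graph and (2) with Theorem~\ref{thm:dareg1}(a) gives that it is regular of valency $|S|$; conversely, if $\dda(X,S)$ is a graph that is regular of valency $|S|$, then the graph condition yields (1) and the regularity yields (2).

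There is no substantial obstacle here; the main thing to be careful about is the accounting that relates out-neighbours of $x$ with the set $x^S$, and in-neighbours with $x^{S^{-1}}$, and the fact that under the assumption $S\subseteq \Der(X)$ the vertex $x$ itself is never in $x^S$ (so one does not accidentally conflate neighbour sets with orbit sets containing $x$). I would state these two short equivalences cleanly, invoke Theorem~\ref{thm:dareg1}(a) for the valency part, and combine them to conclude.
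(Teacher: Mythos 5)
Your proposal is correct and follows essentially the same route as the paper: the paper's Lemma~\ref{lem:dasimple}(a) establishes exactly your equivalence ``$A$ symmetric $\iff$ $x^S=x^{S^{-1}}$ for all $x$'' by identifying out-neighbours with $x^S$ and in-neighbours with $x^{S^{-1}}$, and its part (b) combines this with Lemma~\ref{lem:damultfree} (the content of Theorem~\ref{thm:dareg1}(a)) to match the regularity of valency $|S|$ with condition (2) of Definition~\ref{def:cds}. No gaps.
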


We do not, however, know if $\dda(X,S)$ can be a regular graph of valency less than $|S|$.

\begin{problem}\label{q3} 
Decide whether or not $\dda(X,S)$ can be a regular graph of valency less than $|S|$, and 
if so, find necessary and sufficient conditions on $S$ for this to occur.
\end{problem}

%{\color{red} Cheryl: Check the paper you refereed on two-sided digraphs to see what they constructed.}

Theorem~\ref{thm:dasimple} is proved in Section~\ref{sec:props}. 
If the set $S\subseteq\Der(X)$ is self-inverse, then condition 
$(1)$  of  Definition~\ref{def:cds} is vacuously true,  and hence $S$ is closed if and only if  $S^2\subseteq \Der(X)\cup\{1\}$. 
However not all closed derangement sets have this property, see Example~\ref{ex:derclosednonCayley}.

\begin{example}\label{ex:derclosednonCayley}{\rm
Let $X=\{1,2,3,4,5,6\}$ and 
\[
S = \{ (1,2,3,4,5,6),(1,3,2)(4,6,5), (1,6,4,3)(2,5) \} \subseteq \Der(X).
\] 
Here $S$ is closed but not self-inverse, and $\dda(X,S)$ is the regular graph 
of valency $3$ shown in Figure~\ref{pic1}. We note that also $\dda(X,S)=\dda(X,S')$
for a different closed self-inverse subset:
\[
S' = \{ (1,2,3,4,5,6), (1,6,5,4,3,2), (1,3)(2,5)(4,6) \} \subseteq \Der(X).
\] 
}
\end{example}

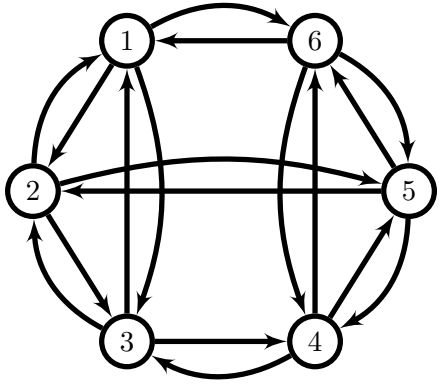
\begin{figure}
\begin{center}
\begin{tikzpicture}[scale=0.50]
\tikzset{vertex/.style = {shape=circle,draw, line width=2pt,opacity=1.0}}
\tikzset{edge/.style = {->,> = latex', line width=2pt,opacity=1.0}}
% vertices
\node[vertex] (a) at  (-5.0,0) {2};
\node[vertex] (b) at  (-2.5,4) {1};
\node[vertex] (c) at  (2.5,4) {6};
\node[vertex] (d) at  (5.0,0) {5};
\node[vertex] (e) at (2.5,-4) {4};
\node[vertex] (f) at (-2.5,-4) {3};
%edges
\draw[edge] (a)  to[bend left] (b);
\draw[edge] (b) to (a);
\draw[edge] (b)  to[bend left] (c);
\draw[edge] (c) to (b);
\draw[edge] (c)  to[bend left] (d);
\draw[edge] (d) to (c);
\draw[edge] (d)  to[bend left] (e);
\draw[edge] (e) to (d);
\draw[edge] (e)  to[bend left] (f);
\draw[edge] (f) to (e);
\draw[edge] (f)  to[bend left] (a);
\draw[edge] (a) to (f);
\draw[edge] (a)  to[out=15,in=165] (d);
\draw[edge] (d) to (a);
\draw[edge] (b) to[out=-70,in=70] (f);
\draw[edge] (f) to (b);
\draw[edge] (c)  to[out=250,in=110] (e);
\draw[edge] (e) to (c);
\end{tikzpicture}
\caption{The derangement action graph $\da(X,S)=\da(X,S')$ of Example~\ref{ex:derclosednonCayley}.}
\label{pic1}
\end{center}
\end{figure}

Even if we restrict the derangement subsets to be closed and self-inverse, we still obtain
a large class of derangement action graphs, (see Remark~\ref{rem:perfectmatching} for a definition and discussion of perfect matchings).

\begin{theorem}\label{thm:vtr} 
The family of regular simple graphs which can be expressed as $\da(X,S)$, for some closed, self-inverse, subset $S\subseteq\Der(X)$, contains all of the following:
\begin{enumerate}
\item[(a)]   every finite regular simple graph of even valency;
\item[(b)]   every finite regular simple graph of odd valency that has a perfect matching;
\item[(c)]   every finite vertex-transitive graph, and every finite regular bipartite simple graph. 
\end{enumerate}
%Let $\Gamma$ be a simple undirected graph with vertex set $X$ that is regular of
%valency $k$, for some positive integer $k$. If either $k$ is even or $\Gamma$ is vertex-transitive,
%then $\Gamma\cong\da(X,S)$ for some closed derangement set $S$ of Cayley type.  
\end{theorem}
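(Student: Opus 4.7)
The plan is to establish the three parts in sequence, with (b) reducing to (a) and (c) reducing to (a) and (b). In each case I will construct an explicit closed, self-inverse $S\subseteq\Der(X)$ with $\da(X,S)=\Gamma$, invoking Theorem~\ref{thm:dareg1}(a) to verify the closure condition $SS^{-1}\subseteq\Der(X)\cup\{1\}$ (which, since $S$ will be self-inverse, is all that is required for $S$ to be closed).

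For part (a), let $\Gamma$ be a finite $2k$-regular simple graph on vertex set $X$. The key step is Petersen's classical 2-factor theorem, which partitions the edges of $\Gamma$ into $k$ edge-disjoint spanning 2-regular subgraphs $F_1,\dots,F_k$. Since $\Gamma$ is simple, each $F_i$ is a vertex-disjoint union of cycles of length at least $3$. I orient every cycle of $F_i$, which yields a permutation $\sigma_i\in\Sym(X)$; because every cycle has length $\geq 3$, $\sigma_i\in\Der(X)$ and $\sigma_i\neq\sigma_i^{-1}$. Set $S=\{\sigma_1,\sigma_1^{-1},\dots,\sigma_k,\sigma_k^{-1}\}$, which is manifestly self-inverse. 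The edge-disjointness of the $F_i$ together with the cycle-length condition ensures that the $2k$ images $\{x^s:s\in S\}$ are pairwise distinct for every $x\in X$ and coincide exactly with the set of neighbours of $x$ in $\Gamma$. Hence $\dda(X,S)=\Gamma$, viewed as an undirected graph, and is regular of valency $|S|=2k$; by Theorem~\ref{thm:dareg1}(a), $S$ is closed and $\da(X,S)=\Gamma$.

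For part (b), let $\Gamma$ be $(2k+1)$-regular with perfect matching $M$, and let $\tau$ be the fixed-point-free involution that swaps $M$-partners, so $\tau=\tau^{-1}\in\Der(X)$. Applying part (a) to the $2k$-regular simple graph $\Gamma-M$ produces a closed self-inverse $S_0\subseteq\Der(X)$ of cardinality $2k$ with $\da(X,S_0)=\Gamma-M$. Take $S=S_0\cup\{\tau\}$. For each $x\in X$, the vertex $x^\tau$ is the $M$-partner of $x$ and is therefore \emph{not} adjacent to $x$ in $\Gamma-M$, so $\tau\notin S_0$ and the $2k+1$ images $\{x^s:s\in S\}$ are again pairwise distinct and equal the neighbours of $x$ in $\Gamma$. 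Thus $\dda(X,S)=\Gamma$ is regular of valency $|S|=2k+1$; it is self-inverse by construction, and Theorem~\ref{thm:dareg1}(a) then certifies that $S$ is closed.

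For part (c), I split each class by the parity of the valency $d$: if $d$ is even, (a) applies directly, so it suffices to exhibit a perfect matching when $d$ is odd and invoke (b). A finite regular bipartite graph of odd valency has a perfect matching by the bipartite case of K\"onig's theorem. A finite vertex-transitive graph of odd valency has even order (since $d|X|=2|E|$), and its connected components are themselves vertex-transitive of the same odd valency and hence of even order; I then invoke the classical theorem that every finite connected vertex-transitive graph of even order admits a perfect matching, and combine these across components. The main obstacle is precisely this last appeal: the perfect-matching theorem for vertex-transitive graphs of even order is a nontrivial classical result (proved via toughness or Tutte-type arguments) which I would cite from the literature rather than reprove. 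Everything else---Petersen's 2-factor theorem, K\"onig's theorem, and the routine verifications that the constructed $S$ is self-inverse, has the right cardinality by edge-disjointness, and is closed by Theorem~\ref{thm:dareg1}(a)---is standard once the factorisation or matching is in hand.
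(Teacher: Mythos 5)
Your proposal is correct and follows essentially the same route as the paper: Petersen's $2$-factorisation plus an orientation of each factor for even valency, adjoining the involution of a perfect matching for odd valency, and reducing part (c) to these via K\"onig/Naddef--Pulleyblank for the bipartite case and the even-order perfect-matching theorem for vertex-transitive components. The only cosmetic difference is that you certify closedness by first checking that the $|S|$ images of each vertex are pairwise distinct and then invoking the equivalences of Theorem~\ref{thm:dareg1}(a), whereas the paper verifies $SS^{-1}\subseteq\Der(X)\cup\{1\}$ directly and then applies Theorem~\ref{thm:dasimple}; these are interchangeable.
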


\begin{remark}\label{rem:perfectmatching}{\rm 
(a) A \emph{perfect matching} in a finite simple graph $\Ga = (V, A)$  is a subgraph $\Ga' = (V, A')$ of $\Ga$
such that each connected component of $\Ga'$ is an edge, that is, its arc set is of the form $\{(u,v),(v,u)\}$.   Tutte's `$1$-factor theorem' gives a necessary and sufficient condition for a  finite simple graph $(V,A)$ to have a perfect matching: for each subset $U\subseteq V$, the subgraph induced on $V\setminus U$ has at most $|U|$ 
connected components with an odd number of vertices (see \cite[Theorem 16.13, p. 430]{bon08a}  or \cite[p. 84]{lp86}). In particular, each finite vertex transitive graph with an even number of vertices, and each finite  regular bipartite simple graph has a perfect matching (see for example, \cite[Theorem 3.5.1]{god01a} and \cite[Theorem 3]{np81}, respectively). Thus every finite vertex-transitive simple graph, and every finite regular bipartite simple graph is of the form $\da(X,S)$ for some  closed, self-inverse, subset $S$ of $\Der(X)$.

\medskip
(b)  The condition of having a perfect matching in Theorem~\ref{thm:vtr}(b) is essential, that is to say, 
a finite regular simple graph of odd valency can be expressed as $\da(X,S)$, for some closed, self-inverse, subset $S\subseteq\Der(X)$,
if and only if it has a perfect matching. To see this, suppose that 
$\Ga=(X,S)$ is a finite regular graph of odd valency, and  $\Ga=\da(X,S)$ for some  closed, 
self-inverse, subset $S$ of $\Der(X)$. Then, by Theorem~\ref{thm:dasimple},  $\Ga$ has valency $|S|$, and hence $|S|$ is odd. 
Since $S$ is self-inverse, it follows that there exists $s\in S$ such that $s=s^{-1}$. This implies that $s^2=1$, and the subgraph 
$\da(X,\{s\})$ of $\Ga$ is a perfect matching of $\Ga$. 

\medskip
(c) It would be interesting to know the extent to which finiteness could be removed from  
Theorem~\ref{thm:vtr}. For example, if $\Ga=(X,A)$ is an infinite simple $k$-regular vertex-transitive graph, then by \cite[Theorem 1.2]{CL}, such a $k$-regular graph  $\Ga=(X,A)$ has a perfect matching  
$(X,A_1)$. Defining  $s_1: X\rightarrow X$ by $x^{s_1}=y$ if and only if $(x,y)\in A_1$, we see that $s_1$ is an involution in $\Der(X)$, and $(X,A_1) =\da(X,\{s_1\})$.  If $k$ were finite and $(X,A_1)$ were left invariant by a vertex-transitive subgroup of automorphisms of $\Ga$, then $(X,A\setminus A_1)$ would be a $(k-1)$-regular vertex-transitive graph and perhaps an inductive argument might be applied. For example such an argument works for Cayley digraphs (see Subsection~\ref{sec:cayley}).

\begin{problem}\label{qvtr} 
Decide which infinite regular simple graphs of finite valency are derangement action graphs. 
\end{problem}

(d)  Theorem~\ref{thm:vtr} is proved in Section~\ref{sec:graph}.
We note that there are also some non-regular graphs of the form $\da(X,S)$ with $S\subseteq\Der(X)$ and $S$ self-inverse (but of course $S$ not closed by Theorem~\ref{thm:dasimple}), see Example~\ref{ex:irregular}. 
}
\end{remark}

%{\bf Look at examples in the new paper on 2S-graphs.}

\subsection{Derangement action digraphs and two-sided group digraphs}\label{sec:cayley}

In developing a theory of derangement action digraphs we were motivated by a desire for 
a natural family of simple digraphs which properly contains all finite simple vertex transitive  graphs and digraphs,
but is not `too large'. We believe that the family of derangement action digraphs has
these desirable properties (Theorems~\ref{thm:dareg1} and~\ref{thm:vtr}). 

Since many vertex-transitive simple digraphs are Cayley digraphs,
we focused first on constructions generalising the Cayley digraph construction, 
which takes as input a group $G$ and a subset $S\subseteq G\setminus \{1\}$, and 
produces a vertex-transitive simple digraph.  
The \emph{Cayley digraph} 
$\dCay(G,S)$ has vertex set $G$ and, for each $s\in S$ and each 
group element $g\in G$, an arc $(g,sg)$ from $g$ to the image 
$sg$ of $g$ under the left multiplication map $\hat s: g\mapsto sg$. Since $s\ne 1$ 
the map $\hat s\in\Der(G)$ and hence, for $\hat S :=\{ \hat s\mid s\in S\}$, we see that 
$\dCay(G,S)$ is equal to the derangement action digraph $\dda(G,\hat S)$ 
associated with the group action digraph $\dga(G,\hat S)$. Moreover $\dCay(G,S)$
is vertex-transitive since each right multiplication map 
$g\mapsto gh$ is an automorphism of $\dCay(G,S)$. 

Cayley digraphs also play a role in the theory of group action digraphs described in 
\cite{ann90a}: we describe this role briefly  in Subsection~\ref{background}, noting 
the differences with our approach.
Our first attempt to generalise Cayley digraphs was made in \cite{ira01}, where we introduced the family of
two-sided group digraphs. The family of two-sided group digraphs also contains all Cayley digraphs \cite[Proposition 1.2]{ira01},
but fails to contain all finite vertex-transitive graphs, not even the Petersen graph \cite[Theorem 1.14]{ira01}. 
On the other hand not all two-sided group digraphs are derangement action 
digraphs as we see below. Despite this fact, the intersection of 
these two families is sufficiently rich to provide  numerous insightful examples.

For a group $G$ and non-empty subsets  $L, R\subseteq G$, the \emph{two-sided group digraph}
$\dsc(G;L,R)$ is the simple digraph with vertex set $G$ such that $(x,y)$ is 
 an arc if and only if $y=\ell^{-1}xr$ for some $\ell\in L$ and $r\in R$. In particular,
 each $\dCay(G,S) = \dsc(G;S^{-1},\{1\})$.  Setting
 ${S}(L,R) = \{ \lambda_{\ell,r}\mid \ell\in L, r\in R\}$, where 
\begin{equation}\label{eq:lambda} 
\lambda_{\ell,r}: g\mapsto \ell^{-1}gr\quad \mbox{for}\ g\in G, 
\end{equation}
we have ${S}(L,R)\subseteq \Sym(G)$ so we can form  the group action digraph
$\dga(G,S(L,R))$. The digraph $\dga(G,S(L,R))$ is loopless, equivalently $S(L,R)\subseteq\Der(G)$, 
if and only if for each $\ell\in L, r\in R$, the $G$-conjugacy classes containing $\ell$ and $r$ are distinct,  
\cite[Lemma 3.1(b)]{ira01}. When this property holds, $\dsc(G;L,R)$ is equal to the derangement 
action digraph $\dda(G,S(L,R))$, and an analogue of Theorem~\ref{thm:dasimple} is proved 
for these digraphs in \cite[Theorem 1.5]{ira01}: the property that $S(L,R)$ is a closed subset of 
derangements is equivalent to the `$2S$-graph-property' of \cite[Definition 1.4]{ira01} (and in this case
$|S(L,R)| = |L|\cdot|R|$). 

Two-sided group digraphs and graphs are a convenient source of informative examples. 
For instance, if $S\subseteq\Der(X)$ is closed then $\dda(X,S)$ has constant in-valency 
and constant out-valency (see Lemma~\ref{lem:damultfree}). However $\dda(X,S)$ may have 
 constant out-valency, but non-constant in-valency (even if $\dga(X,S)$ is loopless and multiplicity-free). 
 The following lovely example, which 
 is  a two-sided group digraph, appeared in \cite{CKLSSY}. 

\begin{example}\label{ex:inout}
{\rm  \cite[Example 2.5]{CKLSSY}\quad 
Let $X=\Alt(4)$, the alternating group of order $12$,  and let $S = S(L,R)$ where 
$L = \{ 1, (243)\}$ and $R=\{ (234), (12)(34), (132), (14)(23)\}$. An easy computation shows that 
$S(L,R)\subset\Der(X)$ and $|S|=|L|\cdot|R|=8$. As discussed in \cite{CKLSSY}, especially  in \cite[Figure 1]{CKLSSY},
$\dda(X,S)=\dsc(S;L,R)$  has constant out-valency $7$, 
while half of the vertices have in-valency $6$ and the other 
half have in-valency $8$.  
}
\end{example}

%\red{\bf Moharram: I would be very grateful if you could check the claims above about the valencies. Please let me know when you have done this.}

\begin{problem}\label{q2} 
Find necessary and sufficient conditions such that, if $\dda(X,S)$ has constant 
out-valency,  then it must also have constant in-valency.
\end{problem}

\subsection{Connectivity of derangement action digraphs}

For vertices $x, y$ of a simple digraph $\Ga=(X,A)$ we say that $x$ \emph{is connected to} 
$y$ if either $x=y$ or there is a vertex sequence $v_0, v_1,\dots,v_r$
such that $v_0=x$, $v_r=y,$ and $(v_{i-1},v_i)\in A$ for each $i=1,\dots,r$. If this
connectivity relation is an equivalence relation then the sub-digraph induced on an equivalence class
is a connected simple digraph and is called a \emph{connected component} of $\Ga$.  
It turns out, in particular, that the connectivity relation is an equivalence relation for each finite 
derangement action digraph, and that the connected components are 
also derangement action digraphs.  For a subset $S\subseteq\Sym(X)$ let  $\langle S\rangle$ 
denote the subgroup of $\Sym(X)$ generated by $S$.

\begin{theorem}\label{thm:connectivity1}
Let $X$ be a non-empty set, and let $S\subseteq\Der(X)$. Then the following hold.
\begin{enumerate}
\item[(a)] If  each $s\in S$ has no infinite cycles, then the connectivity relation for  $\dda(X,S)$ is an equivalence 
relation on $X$.
\item[(b)] If the connectivity relation for  $\dda(X,S)$ is an equivalence 
relation, then the equivalence classes are the orbits of $\langle S\rangle$.
Moreover, for each $\langle S\rangle$-orbit $X'$, the restriction $S' := S|_{X'}\subseteq 
\Der(X')$, and the connected component on $X'$ is $\dda(X',S')$. 
\item[(c)] If $\Ga$ is a simple digraph for which the connectivity relation 
is an equivalence relation, then $\Ga$ is a derangement action digraph if and only if each 
connected component of $\Ga$ is a derangement action digraph.
\end{enumerate}
\end{theorem}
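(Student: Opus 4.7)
The strategy is to handle the three parts in sequence, using part (b) as the backbone for part (c); I write $E(x)$ throughout for the set of vertices $y$ such that $x$ is connected to $y$ in $\dda(X,S)$. For part (a), reflexivity and transitivity of the connectivity relation hold in every simple digraph, so the only task is symmetry. Given an arc $(x,x^s)$ with $s\in S$, I exploit the hypothesis that $s$ has only finite cycles: if $n$ is the length of the $s$-cycle containing $x$, then $x^{s^n}=x$, and
\[
(x^s,x^{s^2}),\ (x^{s^2},x^{s^3}),\ \ldots,\ (x^{s^{n-1}},x^{s^n})=(x^{s^{n-1}},x)
\]
is a directed walk from $x^s$ back to $x$ in $\dda(X,S)$ (each arc has the form $(v,v^s)$). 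Reversing each edge of an arbitrary directed walk $x=v_0\to v_1\to\cdots\to v_r=y$ in this way and concatenating produces a directed walk from $y$ to $x$, giving symmetry.

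For part (b), assume the connectivity relation is an equivalence. The set $E(x)$ is forward-closed under each $s\in S$ by definition. For backward closure: given $y\in E(x)$ and $s\in S$, set $z=y^{s^{-1}}$; the arc $(z,z^s)=(z,y)$ shows $y\in E(z)$, and the assumed symmetry yields $z\in E(y)=E(x)$. Hence $E(x)$ is $\langle S\rangle$-invariant, so $x^{\langle S\rangle}\subseteq E(x)$. The reverse inclusion is immediate, since any directed walk $x\to v_1\to\cdots\to v_k=y$ produces $y=x^{s_1s_2\cdots s_k}\in x^{\langle S\rangle}$. Therefore $E(x)=x^{\langle S\rangle}$ and the equivalence classes are precisely the $\langle S\rangle$-orbits. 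For such an orbit $X'$, invariance under both $s$ and $s^{-1}$ forces $s|_{X'}$ to be a bijection of $X'$, and fixed-point-freeness is inherited, so $S':=S|_{X'}\subseteq\Der(X')$; the induced subgraph on $X'$ has arc set $\{(x,x^{s|_{X'}}):x\in X',\,s|_{X'}\in S'\}$, which is exactly the arc set of $\dda(X',S')$.

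For part (c), the forward direction is immediate from (b). For the converse, suppose each connected component $\Ga_i$ on vertex set $X_i$ has the form $\dda(X_i,S_i)$ with $S_i\neq\emptyset$. Fix a representative $s^{(i)}\in S_i$ for each $i$. For each pair $(i,t)$ with $t\in S_i$, define a permutation $\sigma_{i,t}$ of $X$ by $\sigma_{i,t}|_{X_i}=t$ and $\sigma_{i,t}|_{X_j}=s^{(j)}$ for $j\neq i$. Each $\sigma_{i,t}$ is a derangement of $X$ because its block restrictions are derangements, and each preserves every $X_j$ setwise, so no cross-component arcs appear. Let $S$ be the set of all such $\sigma_{i,t}$: on $X_j$, the permutations with $i=j$ contribute exactly the arcs of $\Ga_j$, while those with $i\neq j$ contribute arcs of $\dda(X_j,\{s^{(j)}\})\subseteq\Ga_j$. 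Hence $\dda(X,S)=\Ga$.

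I expect the main obstacle to lie in part (c): each local derangement $t\in S_i$ must be extended to a global derangement of $X$ without introducing arcs beyond $\Ga$. Using a fixed representative $s^{(j)}\in S_j$ as a filler on every other component resolves this, since such a filler only reproduces arcs already present in $\Ga_j$. A secondary delicacy arises in part (b), where closure of $E(x)$ under $s^{-1}$ genuinely relies on the symmetry hypothesis rather than on reachability alone.
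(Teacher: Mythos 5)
Your proof is correct and follows essentially the same route as the paper's: symmetry in (a) by traversing the finite $s$-cycle, identification of the equivalence classes with the $\langle S\rangle$-orbits in (b), and in (c) extending each local derangement to a global derangement by filling in with derangements on the other components. The only cosmetic difference is that in (c) the paper takes the full cartesian product $\prod_{i} S_i$ of the local connection sets, whereas you use the smaller set of one-coordinate perturbations of a fixed choice of representatives $s^{(i)}\in S_i$; both produce exactly the arc set of $\Ga$.
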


Theorem~\ref{thm:connectivity1} is proved in Section~\ref{sec:conn}. The following example
shows that the connected components in Theorem~\ref{thm:connectivity1} (c) may be 
non-isomorphic, and even have different sizes.

%Then we give an example of a derangement action graph which is connected,
%regular of valency 3, but not vertex-transitive. 
%For a second example, let $S'=\{(01234567),(76543210),(04)(17)(26)(35)\}$ and 
%consider $\da(\mathbb{Z}_8,S')$.
%Then $\da(\mathbb{Z}_8,S')$ is undirected, consisting of a cycle $(0,1,2,3,4,5,6,7)$ 
%together with four chords $\{0,4\}, \{1,7\}, \{2,6\},\{3,5\}$. 
%It is intransitive because there are precisely two $3$-cycles $\{0,1,7\}$ and $\{3,4,5\}$ which involve only six of the eight vertices. 

\begin{example}\label{ex4}{\rm
The simple graph $\da(\mathbb{Z}_7,S)$, where $S=\{(012)(3456),(021)(3654)\}$,  has one component 
isomorphic to the cycle $C_3$, and one component isomorphic to $C_4$. 
}
\end{example}

If $\Ga=\dda(X,S)$ satisfies the hypotheses of  Theorem~\ref{thm:connectivity1} (b) with $\langle S\rangle$ transitive on $X$, 
it is tempting to examine quotient digraphs of $\Ga$ modulo $\langle S\rangle$-invariant partitions of $X$.
However although elements of $S$  permute the parts of these partitions they do not in general 
induce derangements, so the family of derangement action digraphs is not closed under forming such 
quotients. Nevertheless the class is closed under several graph product constructions 
and these are discussed  in Section~\ref{sec:conn}. 

\subsection{Isomorphisms of derangement action digraphs}

Finally we make a few comments about isomorphisms of derangement action digraphs.
An \emph{isomorphism} between two simple digraphs is a bijection from the vertex set of the first 
to the vertex set of the second, which 
induces a bijection from the arc set of the first digraph to that of the second.  Without loss of generality we may assume that the two digraphs 
have the same vertex set. Then an isomorphism from $\Ga = (X, A)$ to $\Delta = (X, B)$ 
is an element $g\in\Sym(X)$ that maps the arc set $A$ of $\Ga$ to the arc set $B$ of $\Delta$. 
If $\Ga=\dda(X,S) = (X, A)$ and $\Delta=\dda(X,T) = (X, B)$, with $S, T\subseteq\Der(X)$, and 
$g\in\Sym(X)$, then it is straightforward to check that  
$g$ maps $\Ga$ to $\Ga^g = \dda(X,S^g)=(X,A^g)$ where $S^g = g^{-1}Sg$.
Therefore, if $g$ is an isomorphism from $\Ga$ to $\Delta$, then both $A^g=B$ and $\dda(X, S^g)=\dda(X,T)$. 
However, this does not imply that $S^g=T$, since $S$ and $T$ 
may have different cardinalities (see Example~\ref{ex:nonuniqueS} with  $S^g=S_1, T=S_2$) and even if 
they have the same cardinality their elements may not even be conjugate in $\Sym(X)$ 
(see Example~\ref{ex:nonuniqueS} with $S^g=S_1, T=S_2$). 
The condition $A^g=B$  holds if and only if, for each $x\in X$, 
$g$ maps the set  of out-neighbours of $x$ in $\Ga$, namely  $x^S=\{x^s \mid s\in S\}$, to the set 
 of out-neighbours of $x^g$ in $\Delta$,  namely  $(x^g)^T$. This is equivalent to the condition
$x^{S^g}=x^T$ for each $x\in X$. 
If $\Ga=\Delta$ then an isomorphism is called an \emph{automorphism}, and the set of all automorphisms 
forms the \emph{automorphism group} $\Aut(\Ga)$.  Theorem~\ref{thm:aut} follows immediately from this discussion.

\begin{theorem}\label{thm:aut}
Let  $\Ga=\dda(X,S)$ and $\Delta=\dda(X,T)$, where $S, T\subseteq\Der(X)$, and $g\in\Sym(X)$.
\begin{enumerate}
\item[(a)] Then $g$ is an isomorphism from $\Ga$  to $\Delta$ if and only if, for all $x\in X$, $x^{S^{g}}\ =\ x^T.$
%\begin{equation}\label{eq:aut} 
%\forall x\in X,\quad x^{S^{g}}\ =\ x^T.\quad %\mbox{where}\quad S^{\theta}=\{\theta^{-1}s\theta | s\in S\}. 
%\end{equation}
\item[(b)]  $\Aut(\Ga)=\{g\in \Sym(X)\mid \forall\, x\in X, x^S=x^{S^{g}}\}$, and in particular,
$\Aut(\Ga)$ contains  $N_{\Sym(X)}(S) =\{ g\in\Sym(X) \mid S^g=S\}$.
\end{enumerate}
\end{theorem}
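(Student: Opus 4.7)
The plan is to carry out the bookkeeping sketched in the paragraph preceding the statement, verifying two key equalities: (i) the image $\Ga^g$ of a derangement action digraph under $g\in\Sym(X)$ is itself a derangement action digraph, namely $\dda(X,S^g)$ with $S^g=g^{-1}Sg$; and (ii) two derangement action digraphs on $X$ are equal precisely when their out-neighborhoods at every vertex coincide.

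For (a), I would first recall that $g\in\Sym(X)$ is an isomorphism from $\Ga=(X,A)$ to $\Delta=(X,B)$ iff $A^g=B$, where $A^g:=\{(x^g,y^g)\mid (x,y)\in A\}$. Writing $A=\{(x,x^s)\mid x\in X,\ s\in S\}$ and substituting $y=x^g$, which is valid since $g$ is a bijection on $X$, one computes
\[
A^g \;=\; \{(x^g, x^{sg}) \mid x\in X,\ s\in S\} \;=\; \{(y, y^{g^{-1}sg}) \mid y\in X,\ s\in S\},
\]
using the right action identity $x^{sg}=(x^g)^{g^{-1}sg}$. Since $S^g=g^{-1}Sg$ is a subset of $\Der(X)$ (conjugation preserves cycle structure, so derangements map to derangements), this is exactly the arc set of $\dda(X,S^g)$. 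Hence $g$ is an isomorphism from $\Ga$ to $\Delta$ iff $\dda(X,S^g)=\dda(X,T)$. Now each of these arc sets is completely determined by the family of out-neighborhoods $\{x^{S^g}\}_{x\in X}$, respectively $\{x^T\}_{x\in X}$ (the arc set of a simple digraph is just the union of $\{x\}\times N^+(x)$), so this equality is equivalent to $x^{S^g}=x^T$ for every $x\in X$. This proves (a).

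Part (b) is immediate from (a) specialised to $T=S$: the condition for $g$ to belong to $\Aut(\Ga)$ becomes $x^{S^g}=x^S$ for every $x\in X$. If moreover $g\in N_{\Sym(X)}(S)$, so that $S^g=S$, then this condition is satisfied trivially for every $x$, giving the stated inclusion $N_{\Sym(X)}(S)\subseteq\Aut(\Ga)$. The only place requiring mild care is the conjugation convention: one must confirm that the right action $v^{sg}=(v^s)^g$ forces the image digraph to be controlled by $S^g=g^{-1}Sg$ rather than $gSg^{-1}$. Apart from that there is no real obstacle; the result is a formal consequence of unpacking the definitions of simple digraph isomorphism and of the arc set of $\dda(X,S)$.
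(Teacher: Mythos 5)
Your proposal is correct and follows essentially the same route as the paper, which derives Theorem~\ref{thm:aut} directly from the discussion preceding it: the identity $\Ga^g=\dda(X,S^g)$ with $S^g=g^{-1}Sg$, and the observation that equality of arc sets is equivalent to equality of all out-neighbourhoods $x^{S^g}=x^T$. Your write-up merely makes explicit the bookkeeping (the substitution $y=x^g$ and the check that conjugation preserves derangements) that the paper leaves implicit.
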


The last assertion of part (b)  has a rather weak corollary: $\dda(X,S)$ is vertex-transitive if $N_{\Sym(X)}(S)$
is transitive on $X$.  This condition does sometimes hold, for example if $S$ is a subset of an 
abelian regular subgroup of $\Sym(X)$ (e.g. $S=S_1$ or $S_2$ of Example~\ref{ex:nonuniqueS}),
but it is not a necessary condition for vertex-transitivity (e.g. if $S=S_3$ in 
Example~\ref{ex:nonuniqueS} then $N_{\Sym(X)}(S) = \langle(13)(24)\rangle$).

\subsection{Brief comments on group action digraphs and Cayley digraphs}\label{background}

As discussed in \cite{ira01}
there are many meaningful applications of Cayley digraphs in molecular biology, computer science and coding theory, 
 and because of their symmetry properties, Cayley digraphs are used as models 
for many interconnection networks. 
In the paper \cite{ann90a} Annexstein et al developed `an algebraic setting for studying certain structural and algorithmic properties of the 
interconnection networks that underlie parallel architectures'. Their setting involved two kinds of digraphs and 
links between them, namely group action digraphs and Cayley digraphs.  
They explored a relationship between group action 
digraphs and Cayley digraphs which is 
different from that suggested in the definition of 
a Cayley digraph. They 
observed (in \cite[Lemma 3.1]{ann90a}) that, for a 
connected group action digraph $\dga(X,S)$, the 
subgroup $G:=\la S\ra$ of $\Sym(X)$ generated by 
$S$ is a transitive permutation group on $X$. They 
showed that $\dga(X,S)$ can be identified with a coset 
digraph for $G$ (on the cosets of a stabiliser $G_v$ 
for some fixed $v\in X$), and then identified 
$\dga(X,S)$ with a quotient digraph of $\dCay(G,S)$. 
They called the Cayley digraph  $\dCay(G,S)$ a 
\emph{Cayley regular cover} of $\dga(X,S)$: it is often much larger than $\dga(X,S)$. They then explored connections 
between the structure of $\dga(X,S)$ and its Cayley 
regular cover $\dCay(G,S)$. 

It is not in general straightforward to 
give explicit descriptions of these Cayley regular 
covers. Responding to a question in \cite[Problem 47]{Hey}, such descriptions were given independently in \cite{Bru, TS} for the family of Kautz digraphs, 
a class of digraphs which is well behaved in terms of various useful parameters for communication network design. 
Also the Cayley regular covers of the de Bruijn digraphs are known \cite{ES,MS}.
\emph{By contrast, we are interested in studying directly (not via their Cayley regular covers) the  
derangement action (simple) digraphs 
corresponding to loopless group action digraphs. }

We mention a few other general studies related to group action digraphs.  Malni\v{c}~\cite[Section 3]{Mal} 
generalises group action graphs to a class of objects which he calls action graphs, in which the connection set $S$ 
is a non-empty subset of a group $G$ acting on the vertex set $X$ (possibly unfaithfully). He requires $S=S^{-1}$, and 
his concept also allows loops, multiple arcs and so-called `semiedges'. Pisanski et al \cite{PTZ} investigate objects 
which they also call action graphs, but which are essentially group action graphs $\dga(X,S)$ for which the connection subset
$S=S^{-1}$, and $S$ is 
contained in a group acting on $X$. 

%%%%%%%%%%%%%%%%%%%%%%%%%%%%%%%%%%%%%%%%%%%%%%%%%%%%%%%
%%%%%%%%%%%%%%%%%%%%%%%%%%%%%%%%%%%%%%%%%%%%%%%%%%%%%%%
%%%%%%%%%%%%%%%%%%%%%%%%%%%%%%%%%%%%%%%%%%%%%%%%%%%%%%%

%
%We prove this property in Theorem \ref{thm:vtda}. We call this graph  \textit{Cayley action graph} and  denote $\ca(X,S)$.
%\begin{example}\label{ex:completegraph}{\rm
%Any Cayley graph is a Cayley action graph. In addition, $K_n$ is isomorphic with $Cay(\mathbb{Z}_n,\mathbb{Z}_n-\{0\})$. So $K_n$ is isomorphic with $\ca(\mathbb{Z}_n,S)$  where $S=\{\lambda_{g}: g\in\mathbb{Z}_n-\{0\}\}$, and $\lambda_{g}: x\mapsto x+g$ for each $x\in \mathbb{Z}_n$.
%}
%\end{example}
%%%%%%%%%%%%%%%%%%%%%%%%%%%%%%%%%%%%%%%%%%%%%%%%%%%%%%%
%%%%%%%%%%%%%%%%%%%%%%%%%%%%%%%%%%%%%%%%%%%%%%%%%%%%%%%
%%%%%%%%%%%%%%%%%%%%%%%%%%%%%%%%%%%%%%%%%%%%%%%%%%%%%%%

%%%%%%%%%%%%%%%%%%%%%%%%%%%%%%%%%%%%%%%%%%%%%%%%%%%%%%%
%%%%%%%%%%%%%%%%%%%%%%%%%%%%%%%%%%%%%%%%%%%%%%%%%%%%%%%
%%%%%%%%%%%%%%%%%%%%%%%%%%%%%%%%%%%%%%%%%%%%%%%%%%%%%%%
%%%%%%%%%%%%%%%%%%%%%%%%%%%%%%%%%%%%%%%%%%%%%%%%%%%%%%%

\section{Proofs of Theorems~\ref{thm:dareg1}(a) and~\ref{thm:dasimple}}\label{sec:props}
%
% {\color{blue}
 We begin by exploring the significance of each of the conditions in Definition~\ref{def:cds}. 
 Recall that a group action digraph $\dga(X,S)$ is loop-less
 if and only if $S\subseteq\Der(X)$. 
 Also $\dga(X,S)$ is multiplicity-free if, for each arc $(x,y)$ of the underlying simple digraph $\dda(X,S)$, there is a unique $s\in S$ such that $y=x^s$; 
  in this case  $\dga(X,S)$ can be identified with $\dda(X,S)$ if we disregard labels on arcs. 
 The first lemma establishes Theorem~\ref{thm:dareg1}(a) and a little bit more.
 
\begin{lemma}\label{lem:damultfree}
Let $X$ be a non-empty set and $S\subseteq \Der(X)$ with $S$ finite. 
Then the following are equivalent:
\begin{enumerate}
\item[(i)]  $\dga(X,S)$ is multiplicity-free;
\item[(ii)] $SS^{-1}\subseteq \Der(X)\cup\{1\}$;
\item[(iii)] each vertex $x$ has out-valency $|S|$ in  $\dda(X,S)$;
\item[(iv)]  each vertex $x$ has  in-valency $|S|$ in  $\dda(X,S)$;
\item[(v)]   $\dda(X,S)$ is regular of valency  $|S|$.
\end{enumerate}
In particular the conclusions of Theorem~$\ref{thm:dareg1}~(a)$ are valid.
\end{lemma}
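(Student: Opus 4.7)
The plan is to prove all five conditions equivalent via a short loop of direct reformulations, with Theorem~\ref{thm:dareg1}(a) then following as the restriction to the sub-chain (i) $\iff$ (ii) $\iff$ (v). The only non-formal input will be a conjugacy observation inside $\Sym(X)$ linking the sets $SS^{-1}$ and $S^{-1}S$; everything else is bookkeeping about out- and in-neighbours.

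First I would handle (i) $\iff$ (iii) as a restatement. For fixed $x\in X$, the set of out-neighbours of $x$ in $\dda(X,S)$ is $x^S = \{x^s \mid s \in S\}$, whose cardinality equals $|S|$ precisely when the map $s\mapsto x^s$ is injective on $S$. This injectivity at $x$ is exactly the requirement $\mult(x,y)\le 1$ for every out-neighbour $y$ of $x$, so quantifying over all $x$ yields (i) $\iff$ (iii). Next, for (iii) $\iff$ (ii), I would rewrite the injectivity of $s\mapsto x^s$ as: $x^{st^{-1}}\ne x$ for all distinct $s,t\in S$, i.e.\ no non-identity element of $SS^{-1}$ fixes $x$. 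Asking this to hold for every $x\in X$ is exactly the statement that every non-identity element of $SS^{-1}$ lies in $\Der(X)$, which is (ii).

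For (iv), a parallel computation identifies the set of in-neighbours of $y$ with $y^{S^{-1}}$ and shows that $|y^{S^{-1}}|=|S|$ for every $y$ if and only if $S^{-1}S\subseteq\Der(X)\cup\{1\}$. To link this back to (ii), I would use the identity $ts^{-1}=s(s^{-1}t)s^{-1}$, so that $ts^{-1}$ and $s^{-1}t$ are conjugate in $\Sym(X)$ and therefore share cycle structures; in particular, one is a derangement if and only if the other is. Since the swap $(s,t)\mapsto(t,s)$ bijects $\{(s,t)\in S\times S\mid s\ne t\}$ with itself, this matches the non-identity elements of $S^{-1}S$ with those of $SS^{-1}$ and preserves the derangement property, giving (ii) $\iff$ (iv). Finally (v) is just the conjunction of (iii) and (iv), so it is equivalent to both, and Theorem~\ref{thm:dareg1}(a) is read off from the sub-equivalences (i) $\iff$ (ii) $\iff$ (v).

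The main (still minor) obstacle is the conjugacy step $ts^{-1}\sim s^{-1}t$; once this is in place the rest is essentially unpacking the definitions of $x^S$, $\mult$, and in- and out-valency, and no finiteness of $X$ is needed anywhere (only the finiteness of $S$, which is used implicitly when counting $|x^S|$ and $|y^{S^{-1}}|$ against $|S|$).
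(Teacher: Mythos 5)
Your proof is correct and follows essentially the same route as the paper's: (ii) $\Leftrightarrow$ (iii) via injectivity of $s\mapsto x^s$, the in-valency condition handled by the parallel computation identifying in-neighbours with $y^{S^{-1}}$ and reducing to $S^{-1}S\subseteq\Der(X)\cup\{1\}$, and (v) obtained as the conjunction of (iii) and (iv). If anything, your explicit conjugacy step $ts^{-1}=s(s^{-1}t)s^{-1}$ linking $SS^{-1}$ with $S^{-1}S$ is a more careful justification than the paper's one-line appeal to ``inverses of derangements are derangements,'' which as literally stated does not connect the two sets.
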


\begin{proof}{
First we prove (ii) $\Leftrightarrow$ (iii). Suppose that 
$SS^{-1}\subseteq \Der(X)\cup\{1\}$, and let $x\in X$. Then $(x,y)$ is an arc of $\dda(X,S)$
if and only if $y=x^s$ for some $s\in S$. If $y=x^{s_1}=x^{s_2}$ for distinct $s_1, s_2\in S$, then 
$x^{s_1s_2^{-1}}=x$ and $1\ne s_1s_2^{-1}\in SS^{-1}$, which is a contradiction. Thus (iii) holds.
Conversely suppose that  each vertex has out-valency $|S|$ in  $\dda(X,S)$, and consider distinct $s_1, s_2\in S$. 
Then for each $x\in X$,  the vertices $x^{s_1}$ and $x^{s_2}$ are distinct out-neighbours of $x$
in  $\dda(X,S)$.  Hence $x^{s_1s_2^{-1}}\ne x$. Since this holds for each $x\in X$, we have  $s_1s_2^{-1}\in\Der(X)$.
Thus (ii) holds. 

Since inverses of derangements are derangements, it follows that 
$SS^{-1}\subseteq \Der(X)\cup\{1\}$ if and only if $S^{-1}S\subseteq \Der(X)\cup\{1\}$,
and an analogous argument shows that (ii) $\Leftrightarrow$ (iv). 
Also, the condition for $\dga(X,S)$ to  be multiplicity-free is equivalent to condition (iii).
Thus the four conditions (i), (ii), (iii), (iv) are pairwise equivalent. Finally, since conditions (iii) and (iv) are equivalent,
and the combination of (iii) and (iv) is equivalent to  $\dda(X,S)$ being regular of valency  $|S|$,
we see that condition (v) is equivalent to each of the other conditions. 

Theorem~\ref{thm:dareg1}~(a) is the assertion that conditions (i), (ii) and (v) are equivalent, and hence is proved.
}
\end{proof}

The next lemma establishes Theorem~\ref{thm:dasimple}. Recall that a subset 
$A\subseteq X\times X$ is symmetric if $(x,y)\in A$ implies $(y,x)\in A$.

\begin{lemma}\label{lem:dasimple}
\begin{enumerate}
\item[(a)]
Let $X$ be a non-empty set, $S\subseteq \Der(X)$ with $S$ finite, and $\Ga = \dda(X, S)=(X, A)$. 
Then $A$ is symmetric if and only if $x^S=x^{S^{-1}}$ for all $x\in X$.
\item[(b)] The conclusion of Theorem~\ref{thm:dasimple} is valid.
\end{enumerate}
\end{lemma}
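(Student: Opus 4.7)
The plan is to prove the two parts in the natural order, using the characterisation of the arc set of $\dda(X,S)$ directly for part (a), and then combining part (a) with Lemma~\ref{lem:damultfree} to obtain part (b).

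For part (a), I would start by unpacking what it means for $A$ to be symmetric in the language of $S$. By Definition~\ref{def:dad}, for any $x,y\in X$ we have $(x,y)\in A$ if and only if $y\in x^S$. Similarly, $(y,x)\in A$ if and only if $x\in y^S$, and rewriting this condition from the point of view of $x$ gives $x=y^s$ for some $s\in S$, equivalently $y=x^{s^{-1}}\in x^{S^{-1}}$. So symmetry of $A$ translates directly into the assertion that $y\in x^S \iff y\in x^{S^{-1}}$ for all $x,y\in X$, i.e.\ $x^S = x^{S^{-1}}$ for every $x\in X$. Both implications follow from this single identification with no further work; the argument is essentially a definition-chase.

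For part (b), I would combine part (a) with Lemma~\ref{lem:damultfree}. Recall that $\dda(X,S)$ is a derangement action \emph{graph} precisely when its arc set $A$ is symmetric, and by part (a) this is equivalent to condition (1) of Definition~\ref{def:cds}. Separately, Lemma~\ref{lem:damultfree} tells us that $\dda(X,S)$ is regular of valency $|S|$ if and only if $SS^{-1}\subseteq\Der(X)\cup\{1\}$, which is condition (2) of Definition~\ref{def:cds}. Hence $\dda(X,S)$ is a regular simple graph of valency $|S|$ if and only if both (1) and (2) hold, i.e.\ $S$ is closed. This completes Theorem~\ref{thm:dasimple}.

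I do not expect any genuine obstacle: everything reduces to unwinding the definitions of $A$, of $x^S$ and $x^{S^{-1}}$, and then quoting Lemma~\ref{lem:damultfree}. The only mild care needed is making sure the equivalence in part (a) is stated as a biconditional over all $x\in X$ (not merely a containment for one particular $x$), so that both directions of part (b) follow cleanly.
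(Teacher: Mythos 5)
Your proposal is correct and follows essentially the same route as the paper: part (a) is the same definition-chase identifying $(y,x)\in A$ with $y\in x^{S^{-1}}$, and part (b) combines part (a) with Lemma~\ref{lem:damultfree} to match the two conditions of Definition~\ref{def:cds} exactly as the paper does. No gaps.
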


\begin{proof}{ (a) 
Suppose first that  $A$ is symmetric. Then we have 
the following equivalent conditions on elements $x,y\in X$. 

\begin{center}
$y\in x^S$ \quad
$\iff$ \quad $y=x^{s}$, for some $s\in S$\quad
$\iff$ \quad $(x,y)\in A$\quad
$\iff$ \quad $(y,x)\in A$,
\end{center}

\noindent
and similarly, $(y,x)\in A$ holds if and only if 

\begin{center}
 $x=y^{s}$,  for some $s\in S$\quad $\iff$ \quad $y=x^{s^{-1}}$, for some $s\in S$\quad
$\iff$ \quad $y\in x^{S^{-1}}$.
\end{center}

%\begin{center}
%\begin{tabular}{ll}
% & $y\in x^S$ \\
%iff & $y=x^{s}$ for some $s\in S$\\
%iff & $(x,y)\in A$\\
%iff & $(y,x)\in A$\\
%iff & $x=y^{s}$, or equivalently $y=x^{s^{-1}}$, for some $s\in S$\\
%iff & $y\in x^{S^{-1}}$.
%\end{tabular}
%\end{center}
\noindent
Thus $x^S=x^{S^{-1}}$ for all $x\in X$. Conversely, 
suppose that this condition holds, and that $(x,y)\in A$.
Then $y=x^{s_1}$ for some  $s_1\in S$, and since   $x^S=x^{S^{-1}}$
we also have  $y=x^{s_2^{-1}}$, or equivalently $x=y^{s_2}$ for some $s_2\in S$, so that $(y,x)\in A$ also.
Thus $A$ is symmetric. 

(b) Now we prove Theorem~\ref{thm:dasimple}. Let $\Ga=\dda(X,S)=(X,A)$ for some finite subset $S\subseteq\Der(X)$. 
Suppose first that $\Ga$ is an $|S|$-regular derangement action graph. Then $A$ is symmetric,
so by part (a), condition (1) of Definition~\ref{def:cds} holds. Also condition (2) of Definition~\ref{def:cds}
follows from Lemma~\ref{lem:damultfree}, since $\Ga$ is $|S|$-regular. Thus $S$ is closed. 
Conversely suppose that $S$ is closed. Then  condition (2) of Definition~\ref{def:cds} holds, and so
 $\Ga$ is $|S|$-regular by Lemma~\ref{lem:damultfree}. Also 
 condition (1) of Definition~\ref{def:cds} holds, and so by part (a) of this lemma, $A$ is symmetric. 
 Thus $\Ga$ is an $|S|$-regular derangement action graph.
}
\end{proof}
%}

%%%%%%%%%%%%%%%%%%%%%%%%%%%%%%%%%%%%%%%%%%%%%%%%%%%%%%%
%%%%%%%%%%%%%%%%%%%%%%%%%%%%%%%%%%%%%%%%%%%%%%%%%%%%%%%
%%%%%%%%%%%%%%%%%%%%%%%%%%%%%%%%%%%%%%%%%%%%%%%%%%%%%%%
%%%%%%%%%%%%%%%%%%%%%%%%%%%%%%%%%%%%%%%%%%%%%%%%%%%%%%%
%%%%%%%%%%%%%%%%%%%%%%%%%%%%%%%%%%%%%%%%%%%%%%%%%%%%%%%
%%%%%%%%%%%%%%%%%%%%%%%%%%%%%%%%%%%%%%%%%%%%%%%%%%%%%%%
\section{Proofs of Theorem~\ref{thm:dareg1}(b) and~\ref{thm:vtr}}\label{sec:graph}  

%{\color{blue}
Recall from Notation~\ref{notation-intro1} that an edge of a simple digraph  is an unordered pair of arcs of the form 
$\{(u,v), (v,u)\}$. 
%In addition to the concepts introduced in Notation~\ref{notation-intro1}, we use the following.
%
A \emph{spanning sub-digraph} of a simple digraph $\Gamma=(V,A)$
is a digraph $(V, A')$ for some subset $A'\subseteq A$, and if $(V,A')$ is a graph 
it is called a  \emph{spanning subgraph} of $\Gamma$.  
%For $n\geq3$, the \emph{directed cycle} and \emph{undirected cycle} on $n$ vertices are the unique
%(up to isomorphism)  connected $1$-regular simple
%digraph, and $2$-regular simple graph, with $n$ vertices, and are denoted $\dc_n$ and $C_n$ respectively. 

A \emph{simple directed cycle} is a  connected $1$-regular simple
digraph with at least two vertices. Thus if  $\Gamma=(V,A)$ is a simple digraph, 
then a $1$-regular spanning sub-digraph of $\Ga$ is a vertex-disjoint union of simple directed cycles.
Also a $1$-regular spanning  subgraph $(V,A')$ of a simple digraph $\Ga$ is a perfect matching,
that is to say $(V,A')$ is a disjoint union of directed cycles, each of length $2$, with each vertex of 
$\Ga$ occuring in exactly one of these cycles.
We will  apply Tutte's 1-factor theorem and 
Petersen's decomposition theorem~\cite{MR1554815} which address the existence of 
perfect matchings, and $2$-regular spanning subgraphs of finite simple digraphs. 

\begin{lemma}\label{lem:dareg}
Let $k$ be a positive integer and let $\Ga$ be a finite, simple, $k$-regular digraph. 
Then $\Ga$ has a   $1$-regular spanning sub-digraph.
\end{lemma}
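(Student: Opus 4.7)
The plan is to reformulate the problem as finding a perfect matching in an auxiliary bipartite graph, and then invoke the classical fact that every finite regular bipartite graph has a perfect matching.

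First I would construct from $\Ga = (V,A)$ a bipartite graph $B = (V^+ \cup V^-, E)$, where $V^+$ and $V^-$ are two disjoint copies of $V$, and where a vertex $v^+ \in V^+$ is joined to $u^- \in V^-$ by an edge in $E$ precisely when $(v,u) \in A$. Since each vertex $v \in V$ has out-valency $k$ in $\Ga$, each $v^+ \in V^+$ has degree $k$ in $B$, and since each $u \in V$ has in-valency $k$ in $\Ga$, each $u^- \in V^-$ likewise has degree $k$ in $B$. Hence $B$ is a finite $k$-regular bipartite simple graph.

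Next I would apply the classical theorem (a consequence of Hall's marriage theorem, or equivalently König's edge-colouring theorem) that every finite $k$-regular bipartite simple graph with $k\geq 1$ has a perfect matching $M$. Such an $M$ determines a bijection $\sigma : V \to V$ by the rule $\sigma(v) = u$ if and only if $\{v^+, u^-\} \in M$.

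Finally, set $A' = \{(v, \sigma(v)) \mid v \in V\} \subseteq A$. By construction each vertex of $(V, A')$ has out-valency $1$ (namely via $\sigma(v)$), and since $\sigma$ is a bijection each vertex also has in-valency $1$. Thus $(V, A')$ is a $1$-regular spanning sub-digraph of $\Ga$, as required. The main conceptual step is the bipartite double-cover construction that separates out-arcs from in-arcs; once this is in place, the rest is an immediate application of a standard matching theorem, so I expect no substantive obstacle.
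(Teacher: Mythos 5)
Your proposal is correct and follows essentially the same route as the paper's own proof: both construct the bipartite double cover separating out-arcs from in-arcs, invoke the existence of a perfect matching in a finite $k$-regular bipartite graph, and translate that matching back into a $1$-regular spanning sub-digraph. No substantive differences.
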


\begin{proof}
{We construct from $\Ga$ a simple bipartite graph $\Delta$ with bipartition $(A,B)$ as follows. 
For each $v\in V(\Ga)$, define vertices $v_1\in A$ and $v_2\in B$, and for each arc $(u,v)\in A(\Ga)$, 
define an edge $\{(u_1,v_2), (v_2,u_1)\}\in E(\Delta)$. By assumption, $\Gamma$ is a simple $k$-regular digraph,
and hence by definition, $\Delta$ is a simple  $k$-regular 
graph which is bipartite with bipartition $(A,B)$. Hence, by \cite[Corollary 16.6]{bon08a}, $\Delta$ has a perfect 
matching corresponding to a set of $M$ edges of $\Delta$.  
Define a subset $S$ of 
$A(\Ga)$ by the rule: $(u,v)\in S$  if and only if $\{(u_1,v_2), (v_2,u_1)\}\in M$. 
Then for every vertex $w\in V(\Ga)$, each of the vertices $w_1, w_2$ of $\Delta$
lies in exactly one edge of $M$, and these edges are distinct by the definition of $\Delta$.
Hence there are unique arcs in $S$ of the form $(w,x)$ and of the form $(x,w)$. 
It follows that the sub-digraph $(V(\Ga),S)$ is $1$-regular and spanning. 
Note that each connected component of $(V(\Ga),S)$ is a directed cycle, and 
will have length 2 if and only if its arc set is $\{(u,v),(v,u)\}$ where both 
of the edges $\{(u_1,v_2), (v_2,u_1)\}$ and $\{(v_1,u_2), (u_2,v_1)\}$ of $\Delta$
lie in $M$.
}
\end{proof}

%\begin{lemma}\label{lem:oddval} 
%If $\Ga=(V, A)$ is a simple  $k$-regular graph with $k$ odd, then $|V|$ is even. 
%\end{lemma}

%\begin{proof}
%{Counting the number of arcs of $\Ga$ we find
%that $|A|=|V|\cdot k$, and $|A|$ is even since $\Ga$ is a simple graph. Since $k$ is odd, it follows that $|V|$ is even. 
%}
%\end{proof}

The next lemma considers decompositions of the arc set of a finite simple regular graph. Recall that a simple graph is a simple digraph $(X, A)$ with $A$ symmetric.

\begin{lemma}\label{lem:pdt} 
Let $\Ga=(X,A)$ be a  finite simple  $k$-regular graph with $k\geq 1$, such that, if $k$ is 
odd then $\Ga$ has a perfect matching. Let $k=2n +\delta$ where $\delta\in\{0, 1\}$.
Then $A$ is a disjoint union $\cup_{i=1}^{n+\delta} A_i$  such that, for $1\leq i\leq n$,  $(X, A_i)$ is a  
$2$-regular spanning subgraph, and if $\delta=1$ and $i=n+\delta$, then $(X,A_i)$ 
is a $1$-regular spanning subgraph (a perfect matching).
\end{lemma}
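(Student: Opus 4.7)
The plan is to reduce the statement to Petersen's classical $2$-factor decomposition theorem~\cite{MR1554815}, which asserts that every finite $2n$-regular graph admits a partition of its edge set into $n$ edge-disjoint $2$-regular spanning subgraphs (equivalently, $n$ edge-disjoint $2$-factors). Given this, the only real work is to extract a perfect matching first when $k$ is odd, and then apply the theorem to the remaining $2n$-regular graph.

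First I would split into cases according to the parity of $k$. If $\delta = 1$, so $k = 2n+1$ is odd, the hypothesis supplies a perfect matching of $\Ga$. In the paper's simple-digraph convention this corresponds to a symmetric arc set $A_{n+1} \subseteq A$ such that $(X, A_{n+1})$ is a $1$-regular spanning subgraph. Removing these arcs leaves $(X, A\setminus A_{n+1})$, a simple graph in which every vertex has valency $k-1 = 2n$. If $\delta = 0$ no removal is needed and I would work with $\Ga$ directly.

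Next I would apply Petersen's theorem to the resulting $2n$-regular simple graph, obtaining a disjoint decomposition of its arc set into $n$ symmetric subsets $A_1, \ldots, A_n$ with each $(X, A_i)$ a $2$-regular spanning subgraph of $\Ga$. Combined with $A_{n+\delta}$ in the odd case, these yield the required partition $A = \bigcup_{i=1}^{n+\delta} A_i$, which is automatically disjoint since each $A_i$ was obtained by successively removing arcs from $A$.

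The main obstacle, such as it is, is not substantive mathematics but reconciling the paper's framework --- in which a graph is a simple digraph with symmetric arc set and an undirected edge is the pair $\{(u,v),(v,u)\}$ --- with the standard undirected formulation of Petersen's theorem. This is purely a matter of bookkeeping: a $2$-regular spanning subgraph on $|X|$ vertices contributes $2|X|$ arcs and a perfect matching contributes $|X|$ arcs, so the total arc count $n\cdot 2|X| + \delta |X| = |X|k$ matches $|A|$ exactly. Once this translation is noted, the proof is immediate from the cited theorem. (Disconnectedness of $\Ga$ presents no obstacle either, since Petersen's theorem may be applied to each connected component separately and the resulting $2$-factors assembled across components.)
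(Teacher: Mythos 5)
Your proposal is correct and follows essentially the same route as the paper's own proof: extract the hypothesised perfect matching when $k$ is odd, reduce to a $2n$-regular graph, and invoke Petersen's decomposition theorem to obtain the $n$ two-regular spanning subgraphs. The only cosmetic difference is that the paper treats the degenerate case $k=1$ explicitly, whereas in your write-up it is absorbed (harmlessly) into the vacuous application of Petersen's theorem to the empty $0$-regular remainder.
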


\begin{proof}
{Suppose first that  $\delta=0$ so $k=2n\geq2$. 
By  Petersen's Decomposition Theorem~\cite{MR1554815} (or see \cite[Theorem 6.2.4, p.  218]{lp86}), there is a partition  $A=\cup_{i=1}^{n} A_i$ of the arc set of  $\Gamma$ such that    each  $(X, A_i)$ is a $2$-regular spanning subgraph.  Thus the result holds in this case. 
 Now let $\delta=1$. Then, by assumption,  $\Ga$ has a perfect matching, say $(X, A')$.
 If $k=1$ then $\Ga = (X, A')$ is itself a $1$-regular spanning subgraph and the result holds.
 So suppose that $k=2n+1\geq3$. Then
$\Ga':=(X, A\setminus A')$ is  a simple $(k-1)$-regular graph and $k-1 = 2n$. 
Thus, as we have just seen, $A\setminus A'$ is a disjoint union $\cup_{i=1}^nA_i$  
such that each $(X, A_i)$ is  a  $2$-regular spanning  subgraph. Then, setting 
$A_{n+1} :=A'$, the required result holds for $\Ga$.
}
\end{proof}

Now we apply these two lemmas to prove Theorems~\ref{thm:dareg1}(b) and~\ref{thm:vtr}. Note that Theorem~\ref{thm:dareg1}
follows from Lemmas~\ref{lem:damultfree} and~\ref{lem:daregb}.

\begin{lemma}\label{lem:daregb}
For $k\geq1$, each finite simple $k$-regular digraph $\Ga = (X, A)$ is of the form $\dda(X,S)$ for some $S\subseteq\Der(X)$ with $|S|=k$.
In particular, the assertion of Theorem~\ref{thm:dareg1}(b) is valid. 
\end{lemma}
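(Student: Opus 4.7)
The plan is to prove Lemma~\ref{lem:daregb} by induction on the valency $k$, using Lemma~\ref{lem:dareg} at each step to peel off one derangement from $\Gamma$.

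For the base case $k=1$, the digraph $\Gamma=(X,A)$ is itself $1$-regular, so each vertex has a unique out-neighbour and a unique in-neighbour. I would define $s\colon X\to X$ by declaring $s(x)$ to be the unique out-neighbour of $x$ in $\Gamma$. Because the in-valency is also $1$, this map is a bijection; and because $\Gamma$ is simple, $(x,s(x))\in X^{(2)}$ so $s(x)\neq x$ for every $x$. Hence $s\in\Der(X)$. Taking $S=\{s\}$, the arc set of $\dda(X,S)$ is precisely $\{(x,x^s):x\in X\}=A$, and $|S|=1=k$.

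For the inductive step, assume the result for all finite simple $(k-1)$-regular digraphs. Given $\Gamma=(X,A)$ a finite simple $k$-regular digraph with $k\geq 2$, Lemma~\ref{lem:dareg} supplies a $1$-regular spanning sub-digraph $(X,A_1)$ with $A_1\subseteq A$. As in the base case, $A_1$ determines a derangement $s_1\in\Der(X)$ characterised by $A_1=\{(x,x^{s_1}):x\in X\}$. Now the digraph $\Gamma':=(X,A\setminus A_1)$ is a finite simple $(k-1)$-regular digraph (simplicity is inherited; regularity follows because removing $A_1$ subtracts exactly one out-arc and one in-arc at every vertex). By the inductive hypothesis, $\Gamma'=\dda(X,S')$ for some $S'\subseteq\Der(X)$ with $|S'|=k-1$. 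Setting $S:=S'\cup\{s_1\}$, the arc set of $\dda(X,S)$ is
\[
\{(x,x^s):x\in X,\,s\in S\}\;=\;(A\setminus A_1)\cup A_1\;=\;A,
\]
so $\Gamma=\dda(X,S)$.

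It remains to check that $|S|=k$, equivalently that $s_1\notin S'$. This is automatic: if $s_1\in S'$ then the arcs $\{(x,x^{s_1}):x\in X\}=A_1$ would all lie in the arc set of $\dda(X,S')=(X,A\setminus A_1)$, forcing $A_1\subseteq A\setminus A_1$, which is impossible since $X$ is non-empty and so $A_1\neq\emptyset$. The assertion of Theorem~\ref{thm:dareg1}(b) then follows directly, and combined with Lemma~\ref{lem:damultfree} this completes Theorem~\ref{thm:dareg1}. I expect no real obstacle beyond being careful that $s_1$ is not already in $S'$; the induction and the appeal to Lemma~\ref{lem:dareg} do all the substantive work.
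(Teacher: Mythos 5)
Your proof is correct and follows essentially the same route as the paper: both repeatedly apply Lemma~\ref{lem:dareg} to peel off a $1$-regular spanning sub-digraph, convert it into a derangement, and pass to the remaining $(k-1)$-regular digraph (the paper phrases this as recursion, you as induction). Your explicit check that $s_1\notin S'$ is a welcome detail that the paper dispatches with the brief remark that the $g_i$ are pairwise distinct by their definition.
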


\begin{proof}{

\medskip\noindent
By Lemma~\ref{lem:dareg}, 
$\Ga$ has a 1-regular spanning sub-digraph $\Ga_1=(X,A_1)$, and such a sub-digraph  
corresponds to a derangement $g_1$ of $X$ as follows: for each $x\in X$, $x^{g_1}$ is the unique vertex 
such that $(x, x^{g_1})$ is an arc of $\Ga_1$.  Since each connected component of  $\Ga_1$ is a 
directed cycle  of length at least 2, it follows that $g_1\in\Der(X)$. 
Now $\Ga':=(X, A\setminus A_1)$ is a simple $(k-1)$-regular digraph, and we recursively 
apply Lemma~\ref{lem:dareg} to $\Ga'$. We obtain a subset $S$ of derangements $g_1,\dots,g_k$ of $X$.
By their definition, the $g_i$ are pairwise distinct, and 
$(x,y)\in A$  if and only if $y=x^{g_i}$ for some $i$. Thus $\Ga=\dda(X,S)$ and $|S|=k$.
}
\end{proof}

\subsection{Proof of Theorem~\ref{thm:vtr} }\label{sec:vtda}

First we deal with parts (a) and (b), so suppose that  $\Ga=(X,A)$ is a finite simple  $k$-regular graph with $k\geq 1$ such that, if $k$ is 
odd, then $\Ga$ has a perfect matching. Let $k=2n +\delta$ where $\delta\in\{0, 1\}$.
By Lemma~\ref{lem:pdt},  $A$ is a disjoint union $\cup_{i=1}^{n+\delta} A_i$  such that, for $1\leq i\leq n$,  $(X, A_i)$ is a  
$2$-regular spanning subgraph, and if $\delta=1$ and $i=n+\delta$, then $(X, A_i)$ is a $1$-regular spanning subgraph
(a perfect matching).

Let $1\leq i\leq n$. Then $(X, A_i)$ is a disjoint union of (undirected) cycles with each 
cycle having length at least $3$. Choose an orientation for each of these cycles 
and let $A_i'$ be the set of arcs (oriented edges) in these cycles. Then $D_i:=(X, A_i')$ is a 
simple $1$-regular spanning sub-digraph of $\Ga$. Also if $\delta=1$ then, 
setting $A_{n+1}':=A_{n+1}$, the sub-digraph $D_{n+1}:=(X, A_{n+1}')$ is a  simple 
$1$-regular spanning subgraph of $\Ga$. For each $i$ such that $1\leq i\leq n+\delta$, 
define $g_i:X\rightarrow X$ by $x^{g_i}=y$ where $(x,y)\in A_i'$. Since $D_i$ is a simple
$1$-regular spanning sub-digraph, it follows that  $g_i$ is well defined and 
is a permutation of $X$ with no fixed points, that is, $g_i\in\Der(X)$. Note that $|g_i|\geq 3$ if $i\leq n$, while if $\delta=1$ then $|g_{n+1}|=2$. 
Let 
\[
S=\{ g_i\mid 1\leq i\leq n+\delta\}\cup\{g^{-1}_i\mid 1\leq i\leq n\}.
\] 
We have just seen that $S\subseteq \Der(X)$ and that $S$ is self-inverse. We will show that 
$S$ is closed, and  that $\Gamma =\da(X, S)$.

By Definition~\ref{def:cds}, to show that $S$ is closed it is sufficient 
to prove (since $S=S^{-1}$) that $SS\subseteq \Der(X)\cup\{1\}$. Let $s, t\in S$. 
If $s=t^{\pm 1}$, then either $st^{-1}=1$, or $st^{-1} = g_j^{\pm 2}$ for some 
$j\leq n$. In the latter case  $g_j^{\pm 2}\in\Der(X)$ since each cycle of 
$D_j$ has length at least $3$. 
Suppose now that $s\ne t^{\pm 1}$. Then $s=g_i^{\pm 1}$ and $t=g_j^{\pm 1}$ for some $i, j$ with $i\ne j$. 
Let $x\in X$.  Then $(x, x^s)\in A_i$ and $(x, x^t)\in A_j$. In particular
$x^s\ne x^t$ and hence $x^{st^{-1}}\ne x$. Since this holds for all $x$, we have $st^{-1}\in\Der(X)$. This proves that $S$ is closed.
 
It now follows from Theorem~\ref{thm:dasimple} that   $\Sigma:=\dda(X,S)$
is a  regular simple graph of valency $|S|$. We claim that $\Sigma=\Ga$. 
By Definition~\ref{def:dad}, $\Sigma = (X, A')$ where $A'=\{ (x,x^s)\mid x\in X, s\in S\}$.
For each $i\leq n+\delta$, we have $A_i'=\{ (x,x^{g_i})\mid x\in X\}$,   and if $i\leq n$ then
$A_i\setminus A_i'=\{ (x,x^{g_i^{-1}})\mid x\in X\}$. Thus $A'=A$ and $\Sigma = \Ga$. 
This completes the proof that each of the graphs in parts (a) and (b) of 
Theorem~\ref{thm:vtr} is a derangement action graph.  

Now we consider part (c). Let  $\Ga=(X,A)$ be a finite simple graph. If $\Ga=(X,A)$ is 
bipartite and $k$-regular, then by \cite[Theorem 3]{np81},  $\Ga$ 
has a perfect matching and the result follows from part (a) if $k$ is even or part (b) if $k$ is odd. 
So we may assume that $\Ga$ is vertex-transitive. Then $\Ga$ is $k$-regular for some 
$k$. The result follows from part (a) if $k$ is even, so assume that $k$ is odd. Let $\Ga'=(X',A')$ be a connected component of 
$\Ga$. Then $\Ga'$ is also a finite simple $k$-regular vertex-transitive graph.  Since $\Ga'$ is a graph,
the number of its arcs is twice the number of its edges, so $|A'|$ is even. Also $|A'|=k|X'|$ 
since $\Ga'$ is $k$-regular, and since $k$ is odd it follows that $|X'|$ is even.  It now
follows from  \cite[Theorem 3.5.1]{god01a} that  $\Ga'$ 
has a perfect matching. Since this holds for all connected components, the graph $\Ga$ has a perfect matching, and the required result now follows from part (b).

%%%%%%%%%%%%%%%%%%%%%%%%%%%%%%%%%%%%%%%%%%%%%%%%%%%%%%%
%%%%%%%%%%%%%%%%%%%%%%%%%%%%%%%%%%%%%%%%%%%%%%%%%%%%%%%

\section{Connectivity and products of derangement action digraphs}\label{sec:conn}

In this section we prove Theorem~\ref{thm:connectivity1} and examine various graph product constructions. 
Recall that $x$ is connected to $y$ in $\Ga$ if either $x=y$ or there is a 
vertex sequence $y_0, y_1,\ldots,y_{r}$ such that $y_0=y$, $y_{r}=x$ and $(y_{i-1},y_i)$ is 
an arc for each $i\geq 1$. We refer to such a sequence as a \emph{directed path} from $x$ to $y$ in $\Ga$, and if $x=y$ then we sometimes think of
the one-vertex sequence $x$ as a directed path of length zero.

\medskip\noindent
\textbf{Proof of Theorem~\ref{thm:connectivity1}.}\quad (a)  Let $\Ga = \dda(X,S)$ where $S\subseteq \Der(X)$ 
and each cycle of each element $s\in S$ is finite.
We show that connectivity is an equivalence relation. By definition, each vertex $x$ is connected to itself. Suppose that $x$ is connected to $y$ and $y$ is connected to $z$. Then there is a directed path from $x$ to $y$ and a directed path from $y$ to $z$ (possibly paths of length zero). Concatenating these directed paths we obtain a directed path from $x$ to $z$, so $x$ is connected to $z$. Finally, to prove that the connectivity relation is symmetric, consider first an arc  $(x,y)$  of $\Ga$. Then $y=x^s$ for some $s\in S$. By assumption the cycle of 
$s$ containing $x$ has finite length, say $\ell$,  so $y^{s^{\ell-1}}=x^{s^{\ell}}=x$. Thus, setting $y_i:=y^{s^i}$ for 
$0\leq i\leq \ell-1$, we have a vertex sequence $y_0, y_1,\ldots,y_{\ell-1}$ such that $y_0=y$, $y_{\ell-1}=x$ and $(y_{i-1},y_i)$ is 
an arc for each $i\geq 1$. Thus $y$ is connected to $x$ for each arc $(x,y)$. Now suppose that $u$ is connected to $v$. If $u=v$ then $v$ is connected to $u$ so suppose that $u\ne v$. Then  there is a sequence  $u_0, u_1,\ldots,u_{r}$ such that $u_0=u$, $u_{r}=v$ and $(u_{i-1},u_i)$ is 
an arc for each $i\geq 1$. We have just shown that, for each arc $(u_{i-1},u_i)$ there is a directed path in $\Ga$ from $u_i$ to $u_{i-1}$. Concatenating these paths we obtain a directed path from $v=u_r$ to $u=u_0$, proving that
$v$ is connected to $u$. Thus  connectivity is an equivalence relation.

(b) Assume  that $\Ga = \dda(X,S)$ (with no restrictions on $S$) such that  connectivity is an equivalence relation. 
If $S$ is empty there is nothing to prove, so assume that $S\ne\emptyset$,
Let $x\in X$, let $[x]$ denote the equivalence class containing $x$, and let $X'$ be the 
 $\langle S\rangle$-orbit containing $x$. We claim that $[x]=X'$. 
Let $y\in [x]$. By definition, $x\in X'$ so assume that $y\ne x$. Then there is a directed path  $x_0, x_1,\ldots,x_{r}$ from $x=x_0$ to $y=x_r$. 
 This means that, for each $i<r$, there exists $s_i\in S$ such that $x_i^{s_i}=x_{i+1}$. Hence $x^{s_0\ldots s_{r-1}}=y$ and so $y\in X'$.
 Thus $[x]\subseteq X'$. Conversely let $y\in X'$. Then there exists $t\in \langle S\rangle$ such that $x^t=y$. 
 The element $t$ is a finite product $t=t_0\dots t_r$ where each $t_i=s_i^{\pm 1}$ for some $s_i\in S$. Set $x_0=x$ and $x_{i+1}=x_{i}^{t_i}$ for each $i\geq0$. If $t_i=s_i$ then $(x_i,x_{i+1})$ is an arc of $\Ga$ (a directed path of length 1), while if $t_i=s_i^{-1}$ then $(x_{i+1}, x_i)$ is an arc and we have shown in the proof of part (a)  that there is a directed path from $x_i$ to $x_{i+1}$ in this case also. Concatenating all of these directed paths we obtain a directed path from $x$ to $y$. Thus $y\in [x]$. We conclude that $[x]=X'$ as claimed. 
 Now let $s\in S$, and let $y\in X'$. Then $y^s\in X'$ as $X'$ is an orbit. Also $y^s\ne y$ since
 $s\in\Der(X)$. Thus the restriction $s|_{X'}\in\Der(X')$.  Let $S' := S|_{X'}$ so $S'\subseteq\Der(X')$. Let $\Ga'$ be the sub-digraph induced by $\Ga$ on $X'$. By the definition of $\Ga$, the arcs of $\Ga'$ are precisely the pairs $(x,y)$ such that $x,y\in X'$ and $y=  x^s$ for some $s\in S$. Then $y$ is the image of $x$ under $s|_{X'}$, that is to say, the arcs of $\Ga'$ are precisely the pairs $(x,y)$ from $X'$ with $y$ the image of $x$ under some 
 $s|_{X'}\in S'$. Thus $\Ga'=\dda(X',S')$. 
   
(c)  Assume now that $\Ga$ is a simple digraph such that  connectivity is an equivalence relation. If $\Ga$ is a derangement action digraph then, by part (b), each commented component of $\Ga$ is also a derangement action digraph. Suppose conversely that the connected components are
the derangement action digraphs $\dda(X_i,S_i)$, for $i\in I$, where the $X_i$ form a partition of the vertex set $X$ of $\Ga$ and each $S_i\subseteq \Der(X_i)$. Consider the cartesian product $\mathcal{S} = \prod_{i\in I} S_i$ identified with the set of functions $f:I\rightarrow \cup_{i\in I} S_i$ such that $f(i)\in S_i$ for each $i\in I$. For each $f\in\mathcal{S}$ let $s_f:X\rightarrow X$ be such that, for each $i\in I$ and each $x\in X_i$,  $x^{s_f}=x^{f(i)}$. Then $s_f\in\Der(X)$. Let $S = \{ s_f \mid f\in\mathcal{S}\}$, so $S\subseteq \Der(X)$.   It is straightforward to check that $\Ga = \dda(X,S)$.
\hfill{$\blacksquare$}\medskip

Next we prove that the family of derangement action digraphs is closed under the following four graph products. 
\begin{definition}\label{def:products}
{{\rm Let $\Ga=(X,A)$ and $\Delta=(Y,B)$ be simple digraphs, and let $Z=X\times Y$, and  $z_i=(x_i,y_i)$ for $i=1,2$.\\
(a) The \emph{cartesian product} $\Sigma = \Ga\square \Delta = (Z,C_\square)$, where $(z_1,z_2)\in C_\square$ if and only if  either $(x_1, x_2)\in A$ and $y_1=y_2$, or $(y_1,y_2)\in B$ and $x_1=x_2$.\\
(b) The \emph{tensor product} $\Ga\times \Delta=(Z,C_\times)$, where $(z_1,z_2)\in C_\times$ if and only if  both $(x_1, x_2)\in A$ and $(y_1,y_2)\in B$.\\
(c) The \emph{strong product} $\Ga\boxtimes \Delta=(Z,C_\boxtimes)$, where $C_\boxtimes = C_\square \cup C_\times$. \\
(d)  The  \emph{lexicographic product} $\Ga[\Delta]=(Z,C_{[\ ]})$, where  $(z_1,z_2)\in C_{[\ ]}$ if and only if  either $(x_1, x_2)\in A$, 
or $x_1=x_2$ and $(y_1,y_2)\in B$.
}}
\end{definition}

The direct product $\Sym(X)\times\Sym(Y)$ acts naturally on $X\times Y$ by $(x,y)^{g,h} = (x^g,y^h)$. This action is used to define the derangement subsets of $\Sym(X)\times\Sym(Y)$. For lexicographic products we require a \emph{regular} subgroup $U\leq\Sym(Y)$, that is, $U$ is transitive on $Y$ and $U\setminus\{1_Y\}\subseteq\Der(Y)$. All the assertions are straightforward to check and the details are therefore omitted. 

\begin{theorem}\label{thm:prod1}
{Let $\Ga=\dda(X,S)$ and $\Delta=\dda(Y,T)$ where $S\subseteq\Der(X)$ and $T\subseteq\Der(Y)$, and let $U\leq\Sym(Y)$ act regularly on $Y$. Then
\begin{itemize}
\item[(a)]  $\Ga\square\Delta = \dda(X\times Y,S\square T)$, where $S\square T=(S\times \{1_Y\})\cup(\{1_{X}\}\times T)$;
\item[(b)] $\Ga\times\Delta = \dda(X\times Y,S\times T)$; 
\item[(c)] $\Ga\boxtimes\Delta = \dda(X\times Y,S\boxtimes T)$, where $S\boxtimes T=(S\square T)\cup (S\times T)$;
\item[(d)] $\Ga[\Delta]=\dda(X\times Y, S[T, U])$, where $S[T, U] = (S\times U)\cup (\{1_X\}\times T)$.
\end{itemize}
In particular, $S\square T, S\times T, S\boxtimes T, S[T, U] \subseteq \Der(X\times Y)$, each is closed if both $S$ and $T$ are closed, and each is self-inverse if both $S$ and $T$ are self-inverse.
}\end{theorem}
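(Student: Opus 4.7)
The plan is to verify each of the four assertions (one per product construction) by checking, in each case, four routine items: (i) the proposed connection set lies inside $\Der(X\times Y)$; (ii) its derangement action digraph on $X\times Y$ has exactly the arc set prescribed by Definition~\ref{def:products}; (iii) closure is inherited from $S$ and $T$; and (iv) self-inversion is inherited from $S$ and $T$. Throughout, the key identity is that $\Sym(X)\times\Sym(Y)$ acts coordinatewise on $X\times Y$ via $(x,y)^{(g,h)}=(x^g,y^h)$, so a pair $(g,h)$ is a derangement of $X\times Y$ as soon as $g\in\Der(X)$ or $h\in\Der(Y)$.

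For (i), each generator in $S\square T$, $S\boxtimes T$, or $S[T,U]$ has at least one derangement coordinate: by construction for the parts of the form $S\times\{1_Y\}$, $\{1_X\}\times T$, or $S\times U$ (the last using $S\subseteq\Der(X)$), and automatically for $S\times T$. The regularity of $U$ on $Y$, which gives $U\setminus\{1_Y\}\subseteq\Der(Y)$, is needed only for later steps. For (ii), I would unpack the image of a typical vertex $(x,y)$ under each generator and match the result against the arc conditions in Definition~\ref{def:products}. The lexicographic case is the only one requiring anything beyond bookkeeping: here I would use the transitivity part of the regularity of $U$ to see that $y^U=Y$, so that $(x,y)^{S\times U}=\{(x^s,y'):s\in S,\,y'\in Y\}$, which matches precisely the ``arbitrary second coordinate'' arcs of $\Ga[\Delta]$.

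For (iii), I would verify both conditions of Definition~\ref{def:cds} for each of the four product sets. Condition (1), the orbit equality $(x,y)^{S\star T}=(x,y)^{(S\star T)^{-1}}$, decomposes coordinatewise and follows at once from the corresponding equalities for $S$ and $T$ together with $U^{-1}=U$. Condition (2), that $(S\star T)(S\star T)^{-1}\subseteq\Der(X\times Y)\cup\{1\}$, reduces to a small case analysis on the two coordinates of a typical product of generators: either both coordinates simultaneously collapse to the identity, or closure of $S$, closure of $T$, or regularity of $U$ forces at least one coordinate to be a derangement of $X$ or $Y$. For (iv), self-inversion follows immediately from $\{1_X\}^{-1}=\{1_X\}$, $\{1_Y\}^{-1}=\{1_Y\}$, $U^{-1}=U$, and the hypothesised self-inversion of $S$ and $T$.

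The main obstacle will be the lexicographic case, because it is the only product that depends on the auxiliary datum $U$, and regularity of $U$ plays two logically distinct roles: first, $U\setminus\{1_Y\}\subseteq\Der(Y)$, used in (i) and in the $U$--$U$ terms of (iii); second, $y^U=Y$ for each $y\in Y$, used in (ii) to realise all arcs with prescribed first coordinate. Once these two uses are kept apart, every check proceeds by the same coordinatewise pattern as in the cartesian, tensor, and strong cases, justifying the authors' remark that the details are straightforward and may be omitted.
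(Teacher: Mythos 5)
Your proposal is correct: the paper itself omits the proof of Theorem~\ref{thm:prod1}, stating only that all the assertions are straightforward to check, and your coordinatewise verification (including the observation that $(g,h)\in\Der(X\times Y)$ as soon as one coordinate is a derangement, and the two distinct uses of regularity of $U$ in the lexicographic case) is exactly the intended argument. Nothing is missing.
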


%%%%%%%%%%%%%%%%%%%%%%%%%%%%%%%%%%%%%%%%%%%%%%%%%%%%%%%
%%%%%%%%%%%%%%%%%%%%%%%%%%%%%%%%%%%%%%%%%%%%%%%%%%%%%%%
%%%%%%%%%%%%%%%%%%%%%%%%%%%%%%%%%%%%%%%%%%%%%%%%%%%%%%%

%%%%%%%%%%%%%%%%%%%%%%%%%%%%%%%%%%%%%%%%%%%%%%%%%%%%%%%
%%%%%%%%%%%%%%%%%%%%%%%%%%%%%%%%%%%%%%%%%%%%%%%%%%%%%%%
%%%%%%%%%%%%%%%%%%%%%%%%%%%%%%%%%%%%%%%%%%%%%%%%%%%%%%%
% {\color{red} [I'm here!]}
{\bf Acknowledgements}

The first author would like to thank the School of Mathematics, Institute
for Research in Fundamental Sciences (IPM) for support. This research
was in part supported by a grant from IPM (No. 94050013).

% -----------------------------------------------------------
% \bibliographystyle{amsplain}
% \bibliography{xbib}
% \end{document}
% -----------------------------------------------------------
\thebibliography{10}
%\bibitem{ani12a}%used
%Kumar V. Anil, Generalized Cayley digraphs,  \emph{Pure Mathematical Sciences} 1 (2012), no. 1, 1--12.

\bibitem{ann90a}%used
Fred Annexstein, Marc Baumslag, and Arnold L. Rosenberg, Group action graphs and
parallel architectures, \emph{SIAM J. Comput.} 19 (1990), no. 3, 544--569.  
%MR MR1041547 (91g:68070)

%\bibitem{MR1556916}%used
%Reinhold Baer, Gruppen mit hamiltonschem Kern, \emph{Compositio Math.} 2 (1935), 
%241--246. MR 155691614

%\bibitem{baf96a}%used
%Vineet Bafna and Pavel A. Pevzner, Genome rearrangements and sorting by reversals,
%\emph{SIAM J. Comput.} 25 (1996), no. 2, 272--289.  MR1379301 (97d:92007)

%\bibitem{bei04a}%used
%Lowell W. Beineke and Robin J. Wilson (eds.), \emph{Topics in algebraic graph theory}, En-
%cyclopedia of Mathematics and its Applications, vol. 102, Cambridge University Press,
%Cambridge, 2004.  MR2125091 (2005m:05002)

%\bibitem{bm76}
%J. A. Bondy and U. S. R. Murty, \emph{Graph theory with applications}. New York: American Elsevier Pub. Co. 1976. ISBN 0-444-19451-7.

\bibitem{bon08a}
J. A. Bondy and U. S. R. Murty, \emph{Graph theory}, Graduate Texts in Mathematics, vol.
244, Springer, New York, 2008. %MR MR2368647 (2009c:05001)
 
% \bibitem{cam99a}
% Peter J. Cameron, \emph{Permutation groups}, London Mathematical Society Student Texts,
% vol. 45, Cambridge University Press, Cambridge, 1999. MR MR1721031 (2001c:20008)
\bibitem{Bru} 
Josep M. Brunat, 
Explicit Cayley covers of Kautz digraphs. \textit{Electron. J. Combin.} \textbf{18} (2011), no. 1, Paper 105.

%\bibitem{cay78a}%used
%Professor Cayley, Desiderata and Suggestions: No. 2. The Theory of Groups: Graphical
%Representation, \emph{Amer. J. Math.} 1 (1878), no. 2, 174--176.  MR1505159

\bibitem{CKLSSY}%used
 Patreck Chikwanda, Cathy Kriloff, Yun Teck Lee, Taylor Sandow, Garrett Smith, and Dmytro Yeroshkin,
Connectedness of two-sided group digraphs, May 2017, arXiv: 1705.05827v2

\bibitem{CL}%used
Endre Cs\'oka and Gabor Lippner, Invariant random perfect matchings in Cayley graphs, \textit{Groups, Geometry, and Dynamics} {\bf 11} (2017), 211 -- 243. arXiv:1211.2374v1. 

%\bibitem{MR1510943}%used
%R. Dedekind, Ueber Gruppen, deren s\"ammtliche Theiler Normaltheiler sind, \emph{Math. Ann.}
%48 (1897), no. 4, 548--561. MR 1510943

\bibitem{ES}
M. Espona and O. Serra, Cayley digraphs based on the de Bruijn networks. \textit{SIAM J. Discrete Math.}
\textbf{11} (1998), 305--317. 

%\bibitem{gau97a}%used
%Ginette Gauyacq, On quasi-Cayley graphs, \emph{Discrete Appl. Math.} 77 (1997), 
%no. 1, 43--58.  MR1460327 (98h:05089)

\bibitem{god01a}
Chris Godsil and Gordon Royle, \emph{Algebraic graph theory}, Graduate Texts in Mathematics,
vol. 207, Springer-Verlag, New York, 2001. %MR 1829620 (2002f:05002)

%\bibitem{hah97a}%used
%Ge\v{n}a Hahn and Gert Sabidussi (eds.), \emph{Graph symmetry}, NATO Advanced Science Insti-
%tutes Series C: Mathematical and Physical Sciences, vol. 497, Dordrecht, Kluwer Aca-
%demic Publishers Group, 1997, Algebraic methods and applications.  MR1468785
%(98d:05002)

\bibitem{Hey}
M. -C. Heydemann, Cayley graphs and interconnection networks. In G. Hahn and G. Sabidussi, editors: \textit{Graph symmetry:  Algebraic Methods and Applications (Montreal, PQ, 1996)},  NATO Adv. Sci. Inst. Ser. C, Math. Phys. Sci., \textbf{497}, Kluwer Acad. Publ., Dordrecht, 1997, pp.  167--224.

% \bibitem{ira01a}
% Mohammad A. Iranmanesh and Cheryl E. Praeger, On non-Cayley vertex-transitive
% graphs of order a product of three primes, \emph{J. Combin. Theory Ser. B} 81 (2001), no. 1,
% 1--19. MR MR1809423 (2002b:05072)
% 
% \bibitem{jaj94a}
% Robert Jajcay and Jozef \v{S}ir\'a\v{n}, A construction of vertex-transitive non-Cayley graphs,
% \emph{Australas. J. Combin.} 10 (1994), 105--114. MR 1296944 (95f:05055)
\bibitem{ira01}%used
Moharram N. Iradmusa and Cheryl E. Praeger, Two-sided group digraphs and graphs, 
\emph{Journal of Graph Theory}, \textbf{82} (2016), 279--295. %DOI 10.1002/jgt.21901

%\bibitem{jukna}%used
%S. Jukna, Extremal Combinatorics: With Applications in Computer Science, Texts in Theoretical Computer Science, Springer Berlin Heidelberg, 2011.

%\bibitem{kec95a}%used
%J. Kececioglu and D. Sankoff, Exact and approximation algorithms for sorting by reversals,
%with application to genome rearrangement, \emph{Algorithmica} 13 (1995), no. 1--2,
%180--210.  MR1304314 (95j:68125)

%\bibitem{kel03a}%used
%Andrei V. Kelarev and Cheryl E. Praeger, On transitive Cayley graphs of groups
%and semigroups, \emph{European J. Combin.} 24 (2003), no. 1, 59--72.  MR1957965
%(2003k:20104)

%\bibitem{lak93a}%used
%S. Lakshmivarahan, Jung Sing Jwo, and S. K. Dhall, Symmetry in interconnection networks
%based on Cayley graphs of permutation groups: a survey, \emph{Parallel Comput.} 19
%(1993), no. 4, 361--407.  MR1216119 (94j:68240)

\bibitem{lp86}%used
L\'{a}szl\'{o} Lov\'{a}sz and M. D. Plummer. {\em Matching theory}. Amsterdam: North-Holland, 1986.  %ISBN 0-444-87916-1.

%\bibitem{MR0292711}
%W. Mader, Minimale {$n$}-fach zusammenh\"angende {G}raphen mit maximaler {K}antenzahl, \emph{J. Reine Angew. Math.} 1971, Issue 249 (1971),
%26--???.  MR0292711 

\bibitem{Mal}
Aleksander Malni\v{c}, Action graphs and coverings. In: \textit{Algebraic and topological methods in graph theory (Lake Bled, 1999).} \textit{Discrete Math.} \textbf{244} (2002), no. 1--3, 299--322.

%\bibitem{mar92}%used
%Dragan Maru\v{s}i\v{c}, Raffaele Scapellato, and Norma Zagaglia Salvi, Generalized Cayley graphs, \emph{Discrete Math.} 249 (1971), 201--207.  MR0292711 (45 \#1794) 

% \bibitem{mar83a}
% Dragan Maru\v{s}i\v{c}, Cayley properties of vertex symmetric graphs, \emph{Ars Combin.} 16 (1983),
% no. B, 297--302. MR 737130 (85f:05071)
% 
\bibitem{MS}%used
S. P. Mansilla and O. Serra. Construction of $k$-arc transitive digraphs. \emph{Discrete
Math.} 231 (2001), 337--349.

% \bibitem{mck94a}
% Brendan D. McKay and Cheryl E. Praeger, Vertex-transitive graphs which are not Cayley
% graphs. I, \emph{J. Austral. Math. Soc. Ser. A} 56 (1994), no. 1, 53--63. MR MR1250993
% (95b:05098)
% 
% \bibitem{MR1397877}
% Brendan D. McKay and Cheryl E. Praeger, Vertex-transitive graphs that are not 
% Cayley graphs. II, \emph{J. Graph Theory} 22
% (1996), no. 4, 321--334. MR 1397877 (97m:05118)
% 
% \bibitem{mil94a}
% Alice Ann Miller and Cheryl E. Praeger, Non-Cayley vertex-transitive graphs of order
% twice the product of two odd primes, \emph{J. Algebraic Combin.} 3 (1994), no. 1, 
% 77--111. MR 1256102 (94k:05090)

%\bibitem{mwa01a}%used
%E. Mwambene, \emph{Representing graphs on groupoids: symmetry and form}, Ph. D. Thesis, 
%University of Vienna, 2001.

%\bibitem{mwa09a}%used
%E. Mwambene, Cayley graphs on left quasi-groups and groupoids representing
%$k$-generalised Petersen graphs, \emph{Discrete Math.} 309 (2009), 2544--2547. 
%MR2512574  (2010g:05165)

\bibitem{np81}
D. Naddef and W. R. Pulleyblank. Matchings in regular graphs, {\em Disc. Math.} {\bf 34} (1981), 283--291.
% 
% \bibitem{MR0021933}
% L. R\'{e}dei, Das ``schiefe Produkt'' in der Gruppentheorie mit Anwendung auf die endlichen
% nichtkommutativen Gruppen mit lauter kommutativen echten Untergruppen und die Ordnungszahlen,
% zu denen nur kommutative Gruppen geh\"oren, \emph{Comment. Math. Helv.} 20
% (1947), 225--264. MR 0021933 (9,131a)

\bibitem{MR1554815}
J. Petersen, Die {T}heorie der regul\"aren graphs, \emph{Acta Math.} 15
(1891), no. 1, 193--220.  %MR MR1554815

\bibitem{PTZ}%used
T. Pisanski, Thomas W. Tucker, and Boris Zgrablic, Strongly adjacency-transitive graphs and uniquely shift-transitive graphs. \textit{Algebraic and topological methods in graph theory (Lake Bled, 1999).} \textit{Discrete Math.} \textbf{244} (2002), no. 1--3,  389--398.

\bibitem{TS}%used
  Yuuki Tanaka and Yukio Shibata, The Cayley digraph associated to the Kautz digraph. \textit{Ars Combin.} \textbf{94} (2010), 321--340.

%\bibitem{MR0097068}%used
%Gert Sabidussi, On a class of fixed-point-free graphs, \emph{Proc. Amer. Math. Soc.} 9 (1958), 800--804.
%MR 0097068 (20 \#3548)

%\bibitem{Sab02}
%Gert Sabidussi, Vertex-transitive graphs, \emph{Monatsh. Math.} 68 (1964), 426--438.

%\bibitem{MR1509580}
%F. Von Baebler, \"{U}ber die {Z}erlegung regul\"arer {S}treckenkomplexe  ungerader {O}rdnung, \emph{Comment. Math. Helv.} 10 (1937), no. 1, 275--287.

\end{document}